\newtheoremstyle{mytheorem}{}{}{\itshape}{}{\bfseries}{:}{\newline}{}
\newtheoremstyle{mydefinition}{}{}{}{}{\bfseries}{:}{\newline}{}
\newtheoremstyle{myproof}{}{}{}{}{\bfseries}{:}{\newline}{#1#3}
\theoremstyle{mytheorem}
\newtheorem{thm}{Theorem}[]
\newtheorem{cor}[thm]{Corollary}
\newtheorem{lem}[thm]{Lemma}
\newtheorem{prop}[thm]{Proposition}
\theoremstyle{mydefinition}
\theoremstyle{myproof}
\newcommand{\Pb}{\mathbb{P}}
\newcommand{\Eb}{\mathbb{E}}
\newcommand{\Rb}{\mathbb{R}}
\newcommand{\Pt}{\tilde{\mathbb{P}}}
\newcommand{\Qt}{\tilde{\mathbb{Q}}}
\newcommand{\Fg}{\mathcal{F}}
\newcommand{\Ft}{\tilde{\mathcal{F}}}
\newcommand{\Gg}{\mathcal{G}}
\newcommand{\Gt}{\tilde{\mathcal{G}}}
\newcommand{\hs}{\hspace{2mm}}
\newcommand{\hsl}{\hspace{1mm}}
\newcommand{\ind}{\mathbbm{1}}
\title{Branching Brownian motion:\\ Almost sure growth along scaled paths}
\author[1]{S.C.~Harris}
\author[2]{M.I.~Roberts}
\affil[1]{Department of Mathematical Sciences, University of Bath, Claverton Down, Bath, BA2 7AY. \emph{E-mail:} S.C.Harris@bath.ac.uk}
\affil[2]{Laboratoire de Probabilit\'es et Mod\`eles Al\'eatoires, Universit\'e Paris VI, 175 rue du Chevaleret, 75013 Paris. \emph{E-mail:} matthew.roberts@upmc.fr}
\begin{document}

\maketitle

\begin{abstract}
\noindent
We give a proof of a result on the growth of the number of particles along chosen paths in a branching Brownian motion. The work follows the approach of classical large deviations results, in which paths of particles in $C[0,T]$, for large $T$, are rescaled onto $C[0,1]$. The methods used are probabilistic and take advantage of modern spine techniques.
\end{abstract}

\section{Introduction and statement of result}

\subsection{Introduction}
Fix $r > 0$ and a random variable $A$ taking values in $\{2,3,\ldots\}$ such that \mbox{$m:=E[A]-1 >1$} and $E[A\log A]<\infty$. We consider a branching Brownian motion (BBM) under a probability measure $\Pb$, which is described as follows. We begin with one particle at the origin. Each particle $u$, once born, performs a Brownian motion independent of all other particles, until it dies, an event which occurs at an independent exponential time after its birth with mean $1/r$. At the time of a particle's death it is replaced (at its current position) by a random number $A_u$ of offspring where $A_u$ has the same distrubition as $A$. Each of these particles, relative to its initial position, repeats (independently) the stochastic behaviour of its parent.

We let $N(t)$ be the set of particles alive at time $t$, and for $u\in N(t)$ let $X_u(t)$ be the position of particle $u$ at time $t$. Fix a set $D\subseteq C[0,1]$ and $\theta\in[0,1]$; then we are interested in the size of the sets
\[N_T(D,\theta) := \{u \in N(\theta T) : \exists f \in D \hbox{ with } X_u(t) = Tf(t/T) \hs \forall t\in[0,\theta T]\}\]
for large $T$.

\subsection{The main result}\label{main_section}
We define the class $H_1$ of functions by
\[H_1 := \left\{ f \in C[0,1] : \exists g \in L^2[0,1] \hbox{ with } f(s) = \int_0^s g(s) ds \hs \forall s\in[0,1]\right\},\]
and to save on notation we set $f'(t):=\infty$ if $f\in C[0,1]$ is not differentiable at the point $t$. We then take integrals in the Lebesgue sense so that we may integrate functions that equal $\infty$ on sets of zero measure. We let
\[\theta_0(f):= \inf\left\{\theta\in[0,1] : rm\theta - \frac{1}{2}\int_0^\theta f'(s)^2 ds < 0\right\} \hsl \in [0,1]\cup\{\infty\}\]
(we think of $\theta_0$ as the extinction time along $f$, the time at which the number of particles near $f$ hits zero) and define our rate function $K$, for $f\in C[0,1]$ and $\theta\in[0,1]$, as
\[K(f,\theta) := \left\{\begin{array}{ll}rm\theta - \frac{1}{2}\int_0^\theta f'(s)^2 ds & \hbox{ if } f\in H_1 \hbox{ and } \theta\leq\theta_0(f)\\ -\infty & \hbox{ otherwise.}\end{array}\right.\]
We expect approximately $\exp(K(f,\theta)T)$ particles whose paths up to time $\theta T$ (when suitably rescaled) look like $f$. This is made precise in Theorem \ref{main_thm}.

\begin{thm}\label{main_thm}
For any closed set $D\subseteq C[0,1]$ and $\theta\in[0,1]$,
\[\limsup_{T\to\infty} \frac{1}{T}\log|N_T(D,\theta)|\leq \sup_{f\in D} K(f,\theta)\]
almost surely, and for any open set $U\subseteq C[0,1]$ and $\theta\in[0,1]$,
\[\liminf_{T\to\infty} \frac{1}{T}\log|N_T(U,\theta)|\geq \sup_{f\in U} K(f,\theta)\]
almost surely.
\end{thm}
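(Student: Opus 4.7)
The plan is to prove the upper and lower bounds by rather different means. The upper bound follows a first-moment and Borel-Cantelli route: by the many-to-one lemma,
\[\Eb[|N_T(D,\theta)|] = e^{rm\theta T}\,\Pb(\hat{B}^T \in D),\]
where $\hat{B}^T(s) := B(sT)/T$ for $s\in[0,\theta]$ and $B$ a standard Brownian motion. Schilder's theorem on $C[0,\theta]$ then bounds $\Pb(\hat{B}^T \in D) \leq \exp(-T\inf_{f\in D}\tfrac12\int_0^\theta f'(s)^2 ds + o(T))$ for closed $D$, and hence $\Eb[|N_T(D,\theta)|] \leq \exp(T\sup_{f\in D}K(f,\theta) + o(T))$. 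Markov's inequality applied at times $T_n$ along a slowly-growing sequence, combined with Borel-Cantelli and a continuity argument that sandwiches intermediate $T$ between $T_n$ and $T_{n+1}$ using a slightly enlarged set $D^\epsilon$, then gives the almost sure upper bound.

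For the lower bound the plan is to combine a spine change of measure with a Paley-Zygmund second-moment argument, and then upgrade positive probability to almost sure using independence of subtrees. Fix $f \in U$ with $K(f,\theta) > -\infty$ (otherwise trivial) and an open ball $V\subseteq U$ around $f$. Introduce the additive martingale
\[Z_T := \sum_{u \in N(\theta T)} \exp\!\Bigl(\int_0^{\theta T} f'(s/T)\,dX_u(s) - \tfrac12\int_0^{\theta T} f'(s/T)^2 ds - rm\theta T\Bigr),\]
and change measure by $d\Pt/d\Pb = Z_T$; under $\Pt$ a distinguished spine particle performs a Brownian motion with time-dependent drift $f'(\cdot/T)$ and branches with the size-biased offspring distribution, while all other particles form ordinary independent BBMs. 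A first-plus-second-moment computation for $X_T := |N_T(V,\theta)|$ gives $\Eb[X_T]$ of order $e^{TK(f,\theta)}$ and $\Eb[X_T^2] \leq C\,\Eb[X_T]^2$, with pair contributions bounded by conditioning on the splitting time and using $\Eb[A\log A]<\infty$ to dominate spine contributions. Paley-Zygmund then yields $\Pb(X_T \geq \tfrac12 \Eb[X_T]) \geq p > 0$ for all large $T$.

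To promote positive probability to almost sure, run the BBM up to a small time $t_0 = \delta T$, where standard estimates give at least $e^{(rm-\epsilon)t_0}$ particles with high probability. Each such particle initiates an independent BBM to which the positive-probability bound applies (with target path suitably translated and rescaled), so the probability that no subtree produces $\geq e^{T(K(f,\theta)-2\epsilon)}$ particles in $V$ is at most $(1-p)^{\exp((rm-\epsilon)\delta T)}$, which is summable along any integer sequence $T_n\to\infty$; Borel-Cantelli plus a final interpolation in $T$ yields the a.s. liminf bound.

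The main obstacle will be the second-moment computation under the spine measure, requiring careful control of pairs of particles that separate late in $[0,\theta T]$ where the rescaled drift $f'(s/T)$ is most delicate; here the $\Eb[A\log A]<\infty$ assumption becomes essential via the standard spine-decomposition uniform-integrability machinery. A secondary technical nuisance arises near $\theta_0(f)$, where $K$ drops off a cliff and small open neighbourhoods of $f$ can abruptly acquire rate $-\infty$, necessitating approximation by functions with slightly smaller horizon before one lets $\epsilon\downarrow 0$.
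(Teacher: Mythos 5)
Your broad outline is the same as the paper's — many-to-one plus Schilder plus Borel--Cantelli for the upper bound, a spine change of measure plus a positive-probability lemma plus subtree independence for the lower — but there are two genuine gaps, one in each half, and both are precisely the places where the paper has to work hardest.

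\textbf{Upper bound.} You dismiss the step from lattice times $T_n$ to continuous $T$ as ``a continuity argument that sandwiches intermediate $T$ \dots using a slightly enlarged set $D^\epsilon$.'' For a general closed set $D$ this is false as stated, and the paper's whole raison d'\^etre (see its appendix, on the oversight in Git's proof) is to fill exactly this hole: the implication from $\limsup_T T^{-1}\log X_T > c$ a.s.\ to $\limsup_T T^{-1}\log \Eb[X_T] \geq c$ does \emph{not} hold, and the sets $N_T(D,\theta)$ have no convenient time-continuity. The sandwiching inequality $|N_t(D,\theta)| \lesssim |N_j(D^\varepsilon,\theta)|\cdot(\text{const})$ for $t\in[j,j+1]$ requires a \emph{uniform} modulus of continuity over all $f\in D$; since $D$ is merely closed, not compact, you don't have this. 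The paper's fix is a three-step compactification: first a separate estimate ruling out any particle following an ``extreme'' path (Lemma \ref{extreme_lem}); then an Arzel\`a--Ascoli argument (Lemma \ref{totally_bdd_lem}) showing that what remains is totally bounded; then covering by finitely many $\varepsilon$-balls and applying the interpolation lemma (Lemma \ref{upper_lattice_to_cts}) ball-by-ball, where the fixed path $f$ provides the needed modulus of continuity. Without this your interpolation step has no proof.

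\textbf{Lower bound.} The positive-probability lemma cannot come from a Paley--Zygmund second-moment bound under the stated hypotheses. You write $\Eb[X_T^2]\leq C\,\Eb[X_T]^2$, but the paper only assumes $\Eb[A\log A]<\infty$, not $\Eb[A^2]<\infty$; for an offspring law with, say, $\Pb(A=k)\asymp k^{-3}(\log k)^{-2}$ the second moment diverges and so does the pair-counting term in $\Eb[X_T^2]$. Saying $\Eb[A\log A]<\infty$ ``dominates spine contributions'' conflates the second-moment method with the spine-decomposition uniform-integrability method; they are not the same, and only the latter works at this level of generality. Relatedly, your additive martingale $Z_T$ is the untruncated Girsanov exponential, which need not be uniformly integrable at all. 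The paper instead uses the truncated martingale $Z_T$ carrying a $\cos\bigl(\tfrac{\pi}{2\varepsilon T}(X_u(t)-Tf(t/T))\bigr)$ factor that vanishes when a particle leaves the tube; it is this truncation, together with the spine decomposition (Lemma \ref{spine_domination} and Proposition \ref{UI}), that yields uniform integrability from $\Eb[A\log A]<\infty$ alone, and then a direct mass-splitting argument (not Paley--Zygmund) gives $\Pb(Z_T(\theta T)>1/2)\geq 1/4K$. Your step-3 amplification using subtree independence and your step-4 interpolation do match the paper's Propositions \ref{lower_prop} and \ref{tube_reduction_prop}, so once the positive-probability lemma is repaired along these lines the rest of your lower bound would go through.

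Finally, a small point you handle correctly but vaguely: the passage from $J$ to $K$ (the ``cliff'' at $\theta_0(f)$) is dealt with in the paper in Corollary \ref{upper_bd} by the integer-valuedness of $|N_T(\cdot,\theta)|$, which forces $\tfrac1T\log|N_T|\in\{-\infty\}\cup[0,\infty)$ and hence collapses any negative rate to $-\infty$; you should make that argument explicit rather than invoking an unspecified approximation by shorter horizons.
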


\noindent
Sections \ref{lower_bound_section} and \ref{upper_bound_section} will be concerned with giving a proof of this theorem.

\vspace{2mm}

An almost identical result was stated by Git in \cite{git:almost_sure_path_properties}. We would like to give an alternative proof for two reasons.

Firstly, we believe that our proof of the lower bound is perhaps more intuitive, and certainly more robust, than that given in \cite{git:almost_sure_path_properties}. There are many more general setups for which our proofs will go through without too much extra work. One possibility is to allow particles to die without giving birth to any offspring (that is, to allow $A$ to take the value $0$): in this case the statement of the theorem would be conditional on the survival of the process, and we will draw attention to any areas where our proof must be adapted significantly to take account of this. There is work in progress on some further interesting cases and their applications, in particular the case where breeding occurs at the inhomogeneous rate $r x^p$, $p\in[0,2)$, for a particle at position $x$.

Secondly, there seems to be a slight oversight in the proof of Lemma 1 in \cite{git:almost_sure_path_properties}, and that lemma is then used in obtaining both the upper and lower bounds. Although the gap seems minor at first, the complete lack of simple continuity properties of the processes involved means that almost all of the work involved in proving the upper bound is concerned with this matter. We give details of the oversight as an appendix.

Our tactic for the proof is to first work along lattice times, and then upgrade to the full result using Borel-Cantelli arguments. We begin, in Section \ref{spine_section}, by introducing a family of martingales and changes of measure which will provide us with intuitive tools for our proofs. We then apply these tools to give an entirely new proof of the lower bound for Theorem \ref{main_thm} in Section \ref{lower_bound_section}. Finally, in Section \ref{upper_bound_section}, we take the same approach as in \cite{git:almost_sure_path_properties} to gain the upper bound along lattice times, and then rule out some technicalities in order to move to continuous time.

This work complements the article by Harris \& Roberts \cite{harris_roberts:unscaled_growth}. Large deviation probabilities for the same model were given by Lee \cite{lee:large_deviations_for_branching_diffusions} and Hardy \& Harris \cite{hardy_harris:spine_bbm_large_deviations}.

\section{A family of spine martingales}\label{spine_section}

\subsection{The spine setup}

We will need to use some modern ``spine'' techniques as part of our proof. We only need some of the most basic spine tools, and we do not attempt to explain the details of these rigourously, but rather refer the interested reader to the article \cite{hardy_harris:spine_approach_applications}.

We first embellish our probability space by keeping track of some extra information about one particular infinite line of descent or \emph{spine}. This line of descent is defined as follows: our original particle is part of the spine; when this particle dies, we choose one of its offspring uniformly at random to become part of the spine. We continue in this manner: when a spine particle dies, we choose uniformly at random between its offspring to decide which becomes part of the spine. In this way at any time $t\geq0$ we have exactly one particle in $N(t)$ that is part of the spine. We refer to both this particle and its position with the label $\xi_t$; this is an abuse of notation, but it should always be clear from the context which meaning is intended. It is not hard to see that the spatial motion of the spine, $(\xi_t)_{t\geq0}$, is a standard Brownian motion.

The resulting probability measure (on the set of \emph{marked Galton-Watson trees with spines}) we denote by $\Pt$, and we find need for four different filtrations to encode differing amounts of this new information:
\begin{itemize}
\item{$\Fg_t$ contains the all the information about the marked tree up to time $t$. However, it does not know which particle is the spine at any point. Thus it is simply the natural filtration of the original branching Brownian motion.}
\item{$\Ft_t$ contains all the information about both the marked tree and the spine up to time $t$.}
\item{$\Gt_t$ contains all the information about the spine up to time $t$, including the birth times of other particles along its path, and how many particles were born at each of these times; it does not know anything about the rest of the tree.}
\item{$\Gg_t$ contains just the spatial information about the spine up to time $t$; it does not know anything about the rest of the tree.}
\end{itemize}
We note that $\Fg_t \subseteq \Ft_t$ and $\Gg_t\subseteq\Gt_t\subseteq\Ft_t$, and also that $\Pt$ is an extension of $\Pb$ in that $\Pt |_{\Fg_\infty} = \Pb$. All of the above is covered more rigourously in \cite{hardy_harris:spine_approach_applications}.

\begin{lem}[Many-to-one lemma]
If $g(t)$ is $\Gg_t$-measureable and can be written
\[g(t) = \sum_{u\in N(t)} g_u(t)\ind_{\{\xi_t = u\}}\]
where each $g_u(t)$ is $\Fg_t$-measureable, then
\[\Eb\left[\sum_{u\in N(t)} g_u(t)\right] = e^{rmt}\tilde\Eb[g(t)].\]
\end{lem}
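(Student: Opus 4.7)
The plan is as follows. Since $g(t)$ is $\Gg_t$-measurable and the spine's spatial path is a standard Brownian motion under $\Pt$, we may represent $g(t) = \phi(\xi_{[0,t]})$ for some measurable path functional $\phi$; the assumed decomposition then forces $g_u(t) = \phi(X_u([0,t]))$ for every $u\in N(t)$. With this observation the right-hand side simplifies directly:
\[\tilde\Eb[g(t)] \;=\; \tilde\Eb[\phi(\xi_{[0,t]})] \;=\; E[\phi(B_{[0,t]})],\]
where $B$ denotes a standard Brownian motion.

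For the left-hand side I would condition on the underlying genealogical tree $\mathcal{T}_t$, consisting of the branching times and offspring numbers but no spatial information. The tree is independent of the spatial motion, and given $\mathcal{T}_t$ the paths $\{X_u([0,t]) : u \in N(t)\}$ are correlated (siblings share the ancestral prefix up to their common branching time) but each one is \emph{marginally} distributed as a standard Brownian motion. Hence $\Eb[\phi(X_u([0,t])) \mid \mathcal{T}_t] = E[\phi(B_{[0,t]})]$ for every $u\in N(t)$, and linearity of expectation gives
\[\Eb\Bigg[\sum_{u\in N(t)} g_u(t)\Bigg] \;=\; \Eb\bigl[|N(t)|\bigr]\cdot E[\phi(B_{[0,t]})].\]
It then remains to verify the classical first-moment identity $\Eb[|N(t)|] = e^{rmt}$, which I would obtain by conditioning on the first branching time $\tau\sim\mathrm{Exp}(r)$ and its offspring count to derive the renewal equation
\[n(t) \;=\; e^{-rt} + r(m+1)\int_0^t e^{-rs}\, n(t-s)\, ds,\]
whose unique bounded solution is $n(t) = e^{rmt}$. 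Combining the three displays proves the lemma.

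The main conceptual point is that $\Gg_t$-measurability collapses $g(t)$ into a single path functional that can be pulled back to each particle, after which the lemma reduces to linearity and a standard first-moment computation; the only nontrivial piece of bookkeeping is ensuring that conditioning on $\mathcal{T}_t$ really leaves each individual trajectory as a Brownian motion. An alternative route would be to exploit the spine-selection weights $\Pt(\xi_t = u \mid \Fg_t) = \prod_{v\prec u}(1/A_v)$ directly, rewriting $\tilde\Eb[g(t)]$ as a $\Pb$-expectation over particles and then applying the identity $\sum_{u\in N(t)}\prod_{v\prec u}(1/A_v) \equiv 1$, but the tree-conditioning approach above feels cleaner and avoids size-biasing bookkeeping.
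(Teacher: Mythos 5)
Your argument is correct (modulo the usual implicit integrability needed to apply Fubini), and it takes a genuinely different route from the one the paper defers to. The paper gives no proof here and simply cites Hardy and Harris, where the standard argument runs through the spine-selection weights: one writes $\Eb\bigl[\sum_u g_u(t)\bigr] = \tilde\Eb\bigl[\sum_u g_u(t)\bigr]$ using $\Pt|_{\Fg_\infty}=\Pb$, inserts $\ind_{\{\xi_t=u\}}/\Pt(\xi_t=u\mid\Fg_t) = \ind_{\{\xi_t=u\}}\prod_{v<u}A_v$ via the tower property, collapses the sum using the hypothesis to get $\tilde\Eb\bigl[g(t)\prod_{v<\xi_t}A_v\bigr]$, and then conditions on $\Gg_t$: since the birth process along the spine under $\Pt$ is a rate-$r$ Poisson process with offspring distributed as $A$, independent of the spine's spatial motion, $\tilde\Eb\bigl[\prod_{v<\xi_t}A_v\mid\Gg_t\bigr]=e^{rmt}$. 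You instead use $\Gg_t$-measurability to write $g(t)=\phi(\xi_{[0,t]})$, note that the decomposition forces $g_u(t)=\phi(X_u([0,t]))$ a.s.\ on $\{u\in N(t)\}$ (both sides are $\Fg_t$-measurable and $\Pt(\xi_t=u\mid\Fg_t)>0$ there), and then condition on the genealogical tree $\mathcal{T}_t$: given $\mathcal{T}_t$, each ancestral path is a concatenation of independent Brownian increments and hence marginally a standard Brownian motion, so the sum factorises as $\Eb[|N(t)|]\,E[\phi(B_{[0,t]})]=e^{rmt}\tilde\Eb[g(t)]$. Your route is shorter and elegantly sidesteps the size-biasing bookkeeping; what it buys is at the cost of leaning on the special fact that every particle's marginal motion is independent of the tree and identical in law to the spine's. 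The spine-weights argument is the more robust one, transferring immediately to settings where the branching rate or motion depends on position (as in the extensions mentioned in the introduction) and forming the template for the change-of-measure results used throughout the rest of the paper, which is why the paper and the cited literature favour it.
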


This lemma is extremely useful as it allows us to reduce questions about the entire population down to calculations involving just one standard Brownian motion --- the spine. A proof may be found in \cite{hardy_harris:spine_approach_applications}.

\subsection{Martingales and changes of measure}
For $f\in C[0,1]$ and $\theta\in[0,1]$ define
\[N_T(f,\varepsilon,\theta) := \{u\in N(\theta T) : |X_u(t)-Tf(t/T)|<\varepsilon T \hs \forall t\in[0,\theta T]\}\]
so that $N_T(f,\varepsilon,\theta) = N_T(B(f,\varepsilon),\theta)$. We look for martingales associated with these sets. For convenience, in this section we use the shorthand
\[N_T(t):= N_T(f,\varepsilon,t/T).\]
Since the motion of the spine is simply a standard Brownian motion under $\Pt$, It\^{o}'s formula shows that for $t\in[0,T]$, the process
\[V_T(t) := e^{\pi^2 t/8\varepsilon^2 T^2} \cos\left(\frac{\pi}{2\varepsilon T}(\xi_t-Tf(t/T))\right)e^{\int_0^t f'(s/T) d\xi_s - \frac{1}{2}\int_0^t f'(s/T)^2 ds}\]
is a $\Gg_t$-martingale under $\Pt$. By stopping this process at the first exit time of the Brownian motion from the tube $\{(x,t): |Tf(t/T)-x|< \varepsilon T\}$, we obtain also that
\[\zeta_T(t) := V_T(t) \ind_{\{|Tf(s/T)-\xi_s| < \varepsilon T \hsl \forall s\leq t\}}\]
is a $\Gg_t$-martingale on $[0,T]$. As in \cite{hardy_harris:spine_approach_applications}, we may build from $\zeta_T$ a collection of $\Ft_t$-martingales $\tilde\zeta_T$ on $[0,T]$ given by
\[\tilde\zeta_T(t) := \prod_{v<\xi_t} A_v e^{-rmt}\zeta_T(t),\]
but these martingales will not be examined in this article --- they are important only in changing measure below, and in that when we project $\tilde\zeta_T(t)$ back onto $\Fg_t$ we get a new set of mean-one $\Fg_t$-martingales $Z_T$. These processes $Z_T$ are the main objects of interest in this section, and can be expressed for $t\in[0,T]$ as the sum
\[Z_T(t) = \sum_{u\in N_T(t)} V_T^{(u)}(t)e^{-rmt}\]
where
\small
\[V_T^{(u)}(t) := e^{\pi^2 t / 8\varepsilon^2 T^2} \cos\left(\frac{\pi}{2\varepsilon T}(X_u(t)-Tf(t/T))\right)e^{\int_0^t f'(s/T) dX_u(s) - \frac{1}{2}\int_0^t f'(s/T)^2 ds}.\]
\normalsize
We now define new measures, $\Qt_T$, via
\[\Qt_T|_{\Ft_t} = \tilde\zeta_T(t)\Pt|_{\Ft_t}\]
for $t\leq T$ --- and note that
\[\Qt_T|_{\Fg_t} = Z_T(t)\Pt|_{\Fg_t} \hs\hs \hbox{and} \hs\hs \Qt_T|_{\Gg_t} = \zeta_T(t)\Pt|_{\Gg_t}.\]

\begin{lem}\label{change_of_meas_lem}
Under $\Qt_T$, the spine $\xi$ moves as a Brownian motion with drift
\[f'(t/T) - \frac{\pi}{2\varepsilon T} \tan\left(\frac{\pi}{2\varepsilon T}(x - Tf(t/T))\right)\]
when at position $x$ at time $t$; in particular,
\[|\xi_t - Tf(t/T)|\leq\varepsilon T \hs \forall t\leq T \hs \Qt_T\hbox{-almost surely.}\]
Each particle $u$ in the spine dies at an accelerated rate $(m+1)r$, to be replaced by a random number $A_u$ of offspring where $A_u$ is taken from the \emph{size-biased} distribution relative to $A$, given by $\Qt_T(A_u = k) = kP(A=k)(m+1)^{-1}$, $k=0,1,\ldots$ (note that this distribution does not depend on $T$). All other particles, once born, behave \emph{exactly as they would under $\Pb$}: they move like independent standard Brownian motions, die at the usual rate $r$, and give birth to a number of particles that is distributed like $A$.
\end{lem}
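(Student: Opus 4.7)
The plan is to verify each assertion about $\tilde{\mathbb{Q}}_T$ by decomposing the Radon--Nikodym density $\tilde{\zeta}_T(t)$ into factors corresponding to the three independent ingredients of the spine process: the Brownian motion of the spine, the rate of branching along the spine, and the offspring distribution at each spine branch. This is exactly the setting for which the general machinery of \cite{hardy_harris:spine_approach_applications} is designed, so I would invoke that framework and simply identify each factor. Writing
\[\tilde\zeta_T(t) = \underbrace{\prod_{v<\xi_t} A_v}_{(a)}\cdot\underbrace{e^{-rmt}}_{(b)}\cdot\underbrace{e^{\pi^2 t/8\varepsilon^2 T^2}\cos\!\left(\tfrac{\pi}{2\varepsilon T}(\xi_t-Tf(t/T))\right)\ind_{\{|Tf(s/T)-\xi_s|<\varepsilon T\hsl\forall s\leq t\}}}_{(c)}\cdot \underbrace{e^{\int_0^t f'(s/T)d\xi_s - \frac{1}{2}\int_0^t f'(s/T)^2 ds}}_{(d)},\]
each factor acts on one ingredient of $\tilde{\mathbb{P}}$.

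First I would handle the spine's spatial motion. Factor $(d)$ is a standard Girsanov exponential, so by Cameron--Martin--Girsanov the spine picks up an extra drift $f'(t/T)$. Factor $(c)$ is a $\Gg_t$-local martingale (by the It\^{o} computation sketched immediately before the lemma), and applying Girsanov to it contributes the additional drift term $-\frac{\pi}{2\varepsilon T}\tan(\frac{\pi}{2\varepsilon T}(\xi_t - Tf(t/T)))$ obtained from the log-derivative of the cosine. Combining the two gives the stated SDE for $\xi$. The indicator in $(c)$ forces $\tilde\zeta_T(t)=0$ whenever the spine leaves the tube, hence under $\tilde{\mathbb{Q}}_T$ the spine cannot exit; this also justifies the $\tan$ blow-up at the boundary (the diverging inward drift prevents exit), giving the almost-sure confinement statement.

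Next I would deal with the branching behaviour of the spine using the factorisation in $(a)$ and $(b)$. The factor $\prod_{v<\xi_t}A_v$ is a pure-jump $\Gt_t$-martingale when compensated by a suitable exponential; indeed the standard spine computation shows that $e^{-rmt}\prod_{v<\xi_t}A_v$ has conditional expectation one and, by the Radon--Nikodym theorem applied to the point process of birth events along the spine, changes the law of this point process into one with intensity $(m+1)r$ and offspring numbers drawn from the size-biased law $kP(A=k)/(m+1)$. This is precisely the standard calculation in \cite{hardy_harris:spine_approach_applications}; I would reference it rather than redo it.

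Finally, for the behaviour of particles off the spine, I would use the fact that the density $\tilde\zeta_T(t)$ involves only the spine's spatial and branching history (through $\xi$, the $A_v$ with $v<\xi_t$, and the factor that depends on $\xi$). Conditional on the spine, the subtrees hanging off spine particles after each branch are independent copies of the original BBM under $\tilde{\mathbb{P}}$; since the density has no dependence on these subtrees, their law is unchanged under $\tilde{\mathbb{Q}}_T$. The only delicate point is bookkeeping: at each spine branch, one of the $A_u$ offspring is selected uniformly to carry the spine and the remaining $A_u - 1$ start independent BBMs; once this is made explicit, the claim is immediate. No step here is a serious obstacle --- the lemma is really a checklist verification within the spine formalism --- but if anything requires care it is confirming that the heuristic Girsanov drift from the confinement factor $(c)$ is rigorously licensed, which is covered by stopping $V_T$ at the tube exit as done just before the lemma.
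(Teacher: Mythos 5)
Your proposal is correct and follows essentially the same approach as the paper: both defer the bulk of the lemma to the spine-theory reference \cite{hardy_harris:spine_approach_applications}, and both prove the confinement statement directly from the fact that the indicator factor in $\zeta_T$ vanishes outside the tube, so that $\Qt_T(\exists t\leq T:|\xi_t-Tf(t/T)|>\varepsilon T)=\tilde\Eb[\zeta_T(T)\ind_{\{\cdot\}}]=0$. The paper's proof is in fact terser than yours --- it explicitly declines to verify the drift formula, noting it is not used downstream --- so your factor-by-factor decomposition and log-derivative Girsanov computation spell out steps the authors chose to cite rather than reproduce.
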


\begin{proof}
Most of this is standard in the spine literature; for example proof can be found in \cite{hardy_harris:spine_approach_applications}. We will not use the precise drift of the spine except for the fact that the spine remains within the tube: to see this note that since the event is $\Gt_T$-measurable,
\[\Qt_T(\exists t\leq T : |\xi_t - Tf(t/T)|>\varepsilon T) = \tilde\Eb[\zeta_T(T)\ind_{\{\exists t\leq T : |\xi_t - Tf(t/T)|>\varepsilon T\}}] = 0\]
by the definition of $\zeta_T(T)$.
\end{proof}

\noindent
Another important tool in this section is the \emph{spine decomposition}.

\begin{lem}[Spine decomposition]
$\Qt_T$-almost surely,
\[\Qt_T[Z_T(t)|\tilde\Gg_T] = \sum_{u<\xi_t} (A_u-1) V_T(S_u)e^{-rmS_u} + V_T(t)e^{-rmt}\]
where we recall that $\{u<\xi_t\}$ is the set of ancestors of the spine particle at time $t$, and $S_u$ denotes the time at which particle $u$ split into two new particles.
\end{lem}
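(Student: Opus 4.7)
My plan is to condition on $\Gt_T$, which contains the spine's full spatial trajectory together with all splitting times $S_u$ and litter sizes $A_u$ along the spine up to time $T$, and to exploit the structure given by the previous change-of-measure lemma. Under $\Qt_T$ conditional on $\Gt_T$, each of the $A_u - 1$ non-spine offspring born at time $S_u$ at position $\xi_{S_u}$ initiates an independent BBM evolving exactly as under $\Pb$. This motivates decomposing $Z_T(t)$ as the spine contribution plus subtree contributions:
\[Z_T(t) = V_T(t)e^{-rmt} + \sum_{u<\xi_t}\sum_{j=1}^{A_u-1} Z_T^{(v_{u,j})}(t),\]
where $v_{u,j}$ denotes the $j$-th non-spine child of $u$ and $Z_T^{(v)}(s) := \sum_{w\in N_T(s),\hsl w\geq v} V_T^{(w)}(s)e^{-rms}$ collects the contributions from descendants of $v$. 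The spine term $V_T(t)e^{-rmt}$ is $\Gg_t$-measurable, hence passes through the conditioning unchanged.

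For each non-spine subtree I would then argue that $\Qt_T[Z_T^{(v_{u,j})}(t)\mid\Gt_T] = V_T(S_u)e^{-rmS_u}$. This rests on three ingredients: (i) $Z_T$ is a $\Pb$-martingale on $\Fg_t$ --- indeed it was constructed above as the $\Fg_t$-projection of the $\Ft_t$-martingale $\tilde\zeta_T$; (ii) since subtrees hanging off distinct children of a common parent evolve as independent BBMs, $Z_T$ decomposes additively into subtree martingales, each started from its own birth space--time point; (iii) the ancestral path of any $w\geq v_{u,j}$ up to time $S_u$ coincides with the spine's path, so $V_T^{(v_{u,j})}(S_u) = V_T(S_u)$, and the tube-membership constraint on $[0,S_u]$ is automatically satisfied $\Qt_T$-almost surely by the previous lemma. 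Combining (i)--(iii) via the branching Markov property applied at time $S_u$ delivers the subtree identity; summing $(A_u-1)$ copies of $V_T(S_u)e^{-rmS_u}$ over $u<\xi_t$ and adding the spine piece then gives the claimed formula.

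The main obstacle, as I see it, is cleanly separating the tube-membership condition $\{w\in N_T(t)\}$ into its past piece on $[0,S_u]$ and its future piece on $[S_u,t]$, which is what allows the subtree contributions to be treated as independent. As observed above, the past piece is automatic under $\Qt_T$, so the constraint effectively reduces to a future event measurable with respect to the independent subtree BBM, and the standard subtree martingale computation then applies. Everything beyond this point is bookkeeping: organising the sums over $u<\xi_t$ and $j\leq A_u-1$ and using the tower property over $\Gt_T$.
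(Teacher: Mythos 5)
Your proposal is correct and reconstructs the standard spine-decomposition argument. The paper itself does not prove this lemma but defers to Hardy and Harris \cite{hardy_harris:spine_approach_applications}, and your argument --- splitting $Z_T(t)$ into the spine term plus the subtree sums $Z_T^{(v_{u,j})}(t)$, observing that under $\Qt_T$ each non-spine subtree evolves conditionally on $\Gt_T$ as an independent $\Pb$-BBM, noting that each $Z_T^{(v_{u,j})}$ is then a $\Pb$-martingale with initial value $V_T^{(v_{u,j})}(S_u)e^{-rmS_u} = V_T(S_u)e^{-rmS_u}$ because the ancestral path up to $S_u$ coincides with the spine's and is inside the tube $\Qt_T$-a.s., and finally applying the tower property --- is essentially the argument given there. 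The one point worth making slightly more explicit is the factorisation underlying ingredient (ii): one should write $V_T^{(w)}(t)e^{-rmt} = V_T(S_u)e^{-rmS_u}\cdot M_w(t)$, where $M_w$ depends only on the post-$S_u$ increments of $X_w$, and verify that $\sum_{w\geq v_{u,j}, w\in N_T(t)} M_w(t)$ is a unit-mean martingale for the subtree BBM started at $(\xi_{S_u}, S_u)$; this is what makes the branching Markov property bite. With that spelled out, the proof is complete and matches the cited source.
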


\noindent
A proof of the spine decomposition may be found in \cite{hardy_harris:spine_approach_applications}.

\begin{lem}
\label{int_lem}
If $f\in C^2[0,1]$ then for any $u\in N_T(t)$, almost surely under both $\Pt$ and $\Qt_T$ we have
\[\left|\int_0^t f'(s/T) dX_u(s) - \int_0^t f'(s/T)^2 ds\right| \leq 2\varepsilon T\int_0^{t/T} |f''(s)| ds + \varepsilon T|f'(0)|.\]
\end{lem}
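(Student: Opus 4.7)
The natural strategy is integration by parts, exploiting the fact that $f\in C^2[0,1]$ makes the integrand $f'(\cdot/T)$ deterministic and continuously differentiable in time, hence a finite-variation process with no martingale part. Since $X_u$ restricted to an ancestral line is a continuous semimartingale under both $\Pt$ and $\Qt_T$, the It\^o integration-by-parts formula applies with zero quadratic covariation contribution:
\[\int_0^t f'(s/T)\, dX_u(s) = f'(t/T)X_u(t) - f'(0)X_u(0) - \frac{1}{T}\int_0^t f''(s/T)X_u(s)\, ds.\]
Applying the classical integration-by-parts identity to the deterministic integral, viewed as $\int_0^t f'(s/T)\, d[Tf(s/T)]$, yields the analogous formula
\[\int_0^t f'(s/T)^2\, ds = f'(t/T)\cdot Tf(t/T) - f'(0)\cdot Tf(0) - \frac{1}{T}\int_0^t f''(s/T)\cdot Tf(s/T)\, ds.\]

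Subtracting the two identities and using that $X_u(0)=0=Tf(0)$ (the first because every particle descends from the one at the origin, the second because the relevant paths satisfy $f\in H_1$, so $f(0)=0$), the lower endpoint contributions cancel exactly, leaving
\[f'(t/T)\bigl[X_u(t)-Tf(t/T)\bigr] \;-\; \frac{1}{T}\int_0^t f''(s/T)\bigl[X_u(s)-Tf(s/T)\bigr]\, ds.\]
I would then take absolute values and invoke the tube constraint $|X_u(s)-Tf(s/T)|\leq \varepsilon T$ on $[0,t]$, which holds for $u\in N_T(t)$ (the strict inequality in the definition implies the non-strict bound on the closed interval by continuity). The integral term is bounded by $\varepsilon\int_0^t |f''(s/T)|\, ds$, which under the change of variables $u=s/T$ becomes $\varepsilon T\int_0^{t/T}|f''(u)|\, du$, and the boundary term by $\varepsilon T|f'(t/T)|$.

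To finish, I would absorb $|f'(t/T)|$ using the fundamental theorem of calculus: $|f'(t/T)|\leq |f'(0)|+\int_0^{t/T}|f''(s)|\, ds$. Adding the two contributions produces the claimed bound $\varepsilon T|f'(0)|+2\varepsilon T\int_0^{t/T}|f''(s)|\, ds$. There is no real obstacle here --- the whole argument is a deterministic manipulation once the stochastic integration by parts has been invoked. The only subtlety worth flagging is the precise cancellation of the lower endpoint, which relies on $f(0)=0$; if one instead only uses $|f(0)|<\varepsilon$ from the tube condition at time zero, the resulting extra term $T|f'(0)||f(0)|\leq \varepsilon T|f'(0)|$ would have to be carried, and the final bound would worsen by a factor of two in the $|f'(0)|$ coefficient.
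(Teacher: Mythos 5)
Your proof is correct and follows essentially the same route as the paper's: apply stochastic and deterministic integration by parts with $g(t)=Tf(t/T)$, combine to isolate the tube deviation $X_u(\cdot)-Tf(\cdot/T)$, bound it by $\varepsilon T$, and then control the boundary coefficient $|f'(t/T)|$ by $|f'(0)|+\int_0^{t/T}|f''|$ via the fundamental theorem of calculus. Your closing remark about the implicit use of $f(0)=0$ is also accurate --- the paper's displayed identities likewise quietly assume $g(0)=0$, which is why the result is only invoked for paths started at the origin.
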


\begin{proof}
From the integration by parts formula for It\^o calculus (since for any particle $u\in N(t)$, $(X_u(s),0\leq s\leq t)$ is a Brownian motion under $\Pt$) we know that for any $g\in C^2[0,1]$ with $g(0)=0$, under $\Pt$,
\[g'(t)X_u(t) = \int_0^t g''(s)X_u(s) ds + \int_0^t g'(s)dX_u(s).\]
From ordinary integration by parts,
\[\int_0^t g'(s)^2 ds = g'(t)g(t) - \int_0^t g(s)g''(s) ds.\]
Now set $g(t)=Tf(t/T)$ for $t\in[0,T]$. We note that, if $u\in N_T(t)$ then $|X_u(s) - g(s)|< \varepsilon T$ for all $s\leq t$. Thus
\begin{align*}
&\biggl|\int_0^t f'(s/T) dX_u(s) - \int_0^t f'(s/T)^2 ds\hspace{0.5mm}\biggr|\\
&= \biggl|\int_0^t g'(s) dX_u(s) - \int_0^t g'(s)^2 ds\hspace{0.5mm}\biggr|\\
&\leq \biggl|\hspace{0.5mm}g'(t)(X_u(t)-g(t)) - \int_0^t g''(s)(X_u(s)-g(s))ds\hspace{0.5mm}\biggr|\\
&\leq |g'(t) - g'(0)| \varepsilon T + |g'(0)|\varepsilon T - \int_0^t |g''(s)| \varepsilon T ds\\
&\leq 2\varepsilon T\int_0^{t} |g''(s)| ds + \varepsilon T|g'(0)|\\
&= 2\varepsilon T\int_0^{t/T} |f''(s)|ds + \varepsilon T|f'(0)|
\end{align*}
almost surely under $\Pt$ and, since $\Qt_T\ll \Pt$, almost surely under $\Qt_T$.
\end{proof}

\noindent
We now use this result to give approximations on $Z_T(t)$ under certain conditions. One of these conditions involves the seemingly unnatural assumption $f'(0)=0$. This is caused by the fact that in this section we make no approximations to the path of the spine under $\Qt_T$ except for using that it always remains within $\varepsilon T$ of our $T$-rescaled path --- hence we are left with a rather bad estimate on its path at small times, where it will not get anywhere near $\varepsilon T$. This does not matter to us, however, precisely because of this freedom to move within the $\varepsilon$-tube about $f$: if $f'(0)\neq0$ then we may choose $g$ near to $f$ (in an appropriate way; certainly within the $\varepsilon$-tube) such that $g'(0)=0$. This issue arises in Lemma \ref{bound_prob_below_1_lem} and rigorous details are given there.

\begin{lem}\label{spine_domination}
If $f\in C^2[0,1]$, $f'(0)=0$ and $rm\phi > \frac{1}{2}\int_0^\phi f'(s)^2 ds$ for all $\phi\in(0,\theta]$, then for small enough $\varepsilon>0$ and any $T>0$ and $t\leq\theta T$, there exists $\eta>0$ such that
\[\Qt_T[Z_T(t)|\Gt_T] \leq \sum_{u<\xi_T} (A_u-1) e^{\pi^2/8\varepsilon^2 T - \eta S_u} + e^{\pi^2/8\varepsilon^2 T - \eta t}\]
$\Qt_T$-almost surely.
\end{lem}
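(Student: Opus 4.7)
The plan is to combine the spine decomposition with a pointwise bound on $V_T(s)e^{-rms}$ valid for $s\in[0,\theta T]$. By the spine decomposition,
\[\Qt_T[Z_T(t)|\Gt_T]=\sum_{u<\xi_t}(A_u-1)V_T(S_u)e^{-rmS_u}+V_T(t)e^{-rmt},\]
and every term is non-negative: by Lemma \ref{change_of_meas_lem} the spine stays in the $\varepsilon T$-tube about $Tf(\cdot/T)$ under $\Qt_T$, so the cosine in $V_T$ takes its argument in $[-\pi/2,\pi/2]$. It therefore suffices to prove
\[V_T(s)e^{-rms}\leq e^{\pi^2/8\varepsilon^2 T-\eta s}\qquad\forall s\in[0,\theta T],\ \Qt_T\text{-a.s.,}\]
after which the stated inequality follows at once by summing and enlarging the index set $\{u<\xi_t\}$ to $\{u<\xi_T\}$.

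For $s\leq T$ the factor $e^{\pi^2 s/8\varepsilon^2 T^2}$ is bounded by $e^{\pi^2/8\varepsilon^2 T}$, and the cosine is at most $1$; what remains is to control the stochastic exponential against $e^{-rms}$. Since under $\Qt_T$ the spine lies in the tube, the argument in the proof of Lemma \ref{int_lem} applies verbatim with $X_u$ replaced by $\xi$ (it uses only It\^o's integration by parts, valid for any semimartingale, together with the tube bound), yielding
\[\int_0^s f'(r/T)d\xi_r\leq \int_0^s f'(r/T)^2 dr+2\varepsilon T\int_0^{s/T}|f''(v)|dv+\varepsilon T|f'(0)|.\]
The hypothesis $f'(0)=0$ annihilates the last term, and boundedness of $f''$ on $[0,1]$ gives $2\varepsilon T\int_0^{s/T}|f''(v)|dv\leq 2\varepsilon\|f''\|_\infty s$. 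This is the decisive point: what at first sight looks like an $O(\varepsilon T)$ error is in fact $O(\varepsilon s)$, small near $s=0$, which is exactly what will let us absorb it inside the $-\eta s$ term.

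Putting everything together and writing $K_0(\phi):=rm\phi-\tfrac12\int_0^\phi f'(v)^2 dv$, I would obtain
\[V_T(s)e^{-rms}\leq e^{\pi^2/8\varepsilon^2 T}\cdot e^{2\varepsilon\|f''\|_\infty s}\cdot e^{-TK_0(s/T)}.\]
The hypothesis is that $K_0>0$ on $(0,\theta]$; since $f'(0)=0$, a direct computation gives $K_0(\phi)/\phi\to rm>0$ as $\phi\downarrow 0$, so the ratio extends continuously and positively to $[0,\theta]$ and is bounded below by some $\eta_0>0$, giving $TK_0(s/T)\geq\eta_0 s$ for all $s\in[0,\theta T]$. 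Choosing $\varepsilon$ small enough that $2\varepsilon\|f''\|_\infty<\eta_0$ and setting $\eta:=\eta_0-2\varepsilon\|f''\|_\infty>0$ finishes the bound. The only real obstacle is the upgrade of Lemma \ref{int_lem}'s error term from $O(\varepsilon T)$ to $O(\varepsilon s)$, which requires exactly the $f'(0)=0$ hypothesis together with the $C^2$ regularity of $f$; without it the error would be of the wrong order and would swamp the exponential decay provided by $TK_0(s/T)$.
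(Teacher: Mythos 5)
Your proof is correct and follows essentially the same path as the paper's: apply the spine decomposition, then bound $V_T(s)e^{-rms}$ via Lemma \ref{int_lem} applied to the spine (which under $\Qt_T$ stays in the tube), using $f'(0)=0$ to kill the $\varepsilon T|f'(0)|$ term and the strict positivity of $K_0$ on $(0,\theta]$ together with $K_0(\phi)/\phi\to rm$ to extract a linear-in-$s$ decay rate $\eta$. The only cosmetic difference is that you bound $\int_0^\phi|f''|$ by $\|f''\|_\infty\phi$ and split $\eta=\eta_0-2\varepsilon\|f''\|_\infty$, whereas the paper keeps $\int_0^\phi|f''|$ intact and chooses $\eta$ and $\varepsilon$ in one go; both yield the same conclusion.
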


\begin{proof}
Since $rm\phi > \frac{1}{2}\int_0^\phi f'(s)^2 ds$ for all $\phi\in(0,\theta]$ and $f'(0)=0$, we may choose $\eta>0$ such that
\[2\eta\phi \leq rm\phi - \frac{1}{2}\int_0^\phi f'(s)^2 ds \hs \forall\phi\in[0,\theta].\]
Then for any $\varepsilon>0$ satisfying
\[2\varepsilon\int_0^\phi |f''(s)|ds \leq \eta\phi \hs \forall\phi\in[0,\theta]\]
we have, by Lemma \ref{int_lem} (since $f'(0)=0$ and using the fact that under $\Qt_T$ the spine is always in $N_T(t)$),
\[V_T(t)e^{-rmt} \leq e^{\pi^2/8\varepsilon^2 T - rmt + \frac{T}{2}\int_0^{t/T}f'(s)^2 ds + 2\varepsilon T\int_0^{t/T}|f''(s)|ds} \leq e^{\pi^2/8\varepsilon^2 T - \eta t}\]
for all $t\in[0,\theta T]$. Plugging this into the spine decomposition, we get
\[\Qt_T[Z_T(t)|\Gt_T] \leq \sum_{u<\xi_T} (A_u-1) e^{\pi^2/8\varepsilon^2 T - \eta S_u} + e^{\pi^2/8\varepsilon^2 T - \eta t}. \qedhere\]
\end{proof}

\begin{prop}\label{UI}
If $f\in C^2[0,1]$, $f'(0)=0$ and $rm\phi > \frac{1}{2}\int_0^\phi f'(s)^2 ds$ for all $\phi\in(0,\theta]$, then for small enough $\varepsilon>0$ the set $\{Z_T(t) : T\geq 1, t\leq\theta T\}$ is uniformly integrable under $\Pb$.
\end{prop}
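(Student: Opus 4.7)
The strategy uses the standard measure-change criterion for uniform integrability. Since $\Qt_T|_{\Fg_t}=Z_T(t)\Pt|_{\Fg_t}$ and $\Pt|_{\Fg_\infty}=\Pb$, we have $\Eb[Z_T(t)\ind_{\{Z_T(t)>K\}}]=\Qt_T(Z_T(t)>K)$, so the desired uniform integrability under $\Pb$ is equivalent to showing $\sup_{T\geq 1,\,t\leq\theta T}\Qt_T(Z_T(t)>K)\to 0$ as $K\to\infty$. For $T\geq 1$ the prefactor $e^{\pi^2/8\varepsilon^2 T}$ appearing in Lemma \ref{spine_domination} is bounded by the constant $C:=e^{\pi^2/8\varepsilon^2}$, so that lemma gives
\[\Qt_T[Z_T(t)\mid\Gt_T]\leq C\Bigl(\sum_{u<\xi_T}(A_u-1)e^{-\eta S_u}+1\Bigr)=:M_T,\]
and hence, by conditional Markov, $\Qt_T(Z_T(t)>K)\leq\tilde\Eb[(M_T/K)\wedge 1]$. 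By Lemma \ref{change_of_meas_lem}, under $\Qt_T$ the spine birth times form a Poisson process of rate $(m+1)r$ and the offspring counts $A_u$ are i.i.d.\ size-biased, \emph{with joint law independent of $T$}; in particular $M_T$ is stochastically dominated, uniformly in $T$, by $M_\infty:=C(S+1)$, where $S:=\sum_{n\geq 1}(A_n-1)e^{-\eta\tau_n}$ is the analogous sum over the full (infinite-time) Poisson birth process.

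The key step is to show $S<\infty$ a.s. Size-biasing converts $\Eb[A\log A]<\infty$ into $\tilde\Eb[\log A]=\Eb[A\log A]/(m+1)<\infty$, and since $A\geq 2$ we get $\sum_n\Qt_T(\log A_n>\alpha n)\leq\tilde\Eb[\log A]/\alpha<\infty$ for every $\alpha>0$. Combined with $\tau_n/n\to 1/((m+1)r)$ from the strong law applied to the Poisson arrival times, Borel--Cantelli yields $A_n\leq e^{\eta\tau_n/2}$ for all sufficiently large $n$ a.s., so the tail of $S$ is dominated by the geometrically convergent sum $\sum_n e^{-\eta\tau_n/2}$. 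Hence $M_\infty<\infty$ a.s., and dominated convergence gives $\tilde\Eb[(M_\infty/K)\wedge 1]\to 0$ as $K\to\infty$. This bound is independent of $(T,t)$, completing the argument.

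The whole proof rests on the Borel--Cantelli/SLLN step above. Because $\Eb[A\log A]<\infty$ does not force $\Eb[A^2]<\infty$, the size-biased offspring $A_u$ may fail to be integrable under $\Qt_T$, so the naive first-moment estimate $\tilde\Eb[S]\leq\tilde\Eb[A-1]\cdot r(m+1)/\eta$ is useless; one must establish a.s.\ finiteness of $S$ directly. This is precisely the mechanism through which the $A\log A$ moment hypothesis enters, and is the only nontrivial ingredient in the proof.
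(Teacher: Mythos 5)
Your proof is correct and follows essentially the same route as the paper's: both reduce uniform integrability under $\Pb$ to a tail estimate under $\Qt_T$, apply Lemma \ref{spine_domination} to dominate $\Qt_T[Z_T(t)\mid\Gt_T]$ uniformly in $(T,t)$ by a $T$-free random variable built from the Poisson birth process and i.i.d.\ size-biased offspring along the spine, and then establish a.s.\ finiteness of $\sum_n(A_n-1)e^{-\eta\tau_n}$ via the strong law for $\tau_n$ together with a Borel--Cantelli argument that hinges on the $E[A\log A]<\infty$ hypothesis. The only cosmetic difference is in the wrap-up: you use conditional Markov plus the bound $\wedge 1$ and dominated convergence, whereas the paper uses an explicit two-level $K$/$MK$ truncation; these are interchangeable presentations of the same idea.
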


\begin{proof}
Fix $\delta>0$. We first claim that there exists $K$ such that
\[\sup_{T\geq1,\hsl t\leq\theta T}\Qt_T(\Qt_T[Z_T(t)|\Gt_T] > K) < \delta/2.\]
To see this, take an auxiliary probability space with probability measure $Q$, and on this space consider a sequence $A_1, A_2, \ldots$ of independent and identically distributed random variables satisfying
\[Q(A_i=k) = \frac{k\Pb(A=k)}{m+1}\]
so that the $A_i$ have the same distribution as births $A_u$ along the spine under $\Qt_T$ (recall that there is no dependence on $T$). Take also a sequence $e_1, e_2, \ldots$ of independent random variables that are exponentially distributed with parameter $r(m+1)$; then set $S_n = e_1 + \ldots + e_n$ (so that the random variables $S_n$ have the same distribution as the birth times along the spine under $\Qt_T$). By Lemma \ref{spine_domination} we have
\[\sup_{\substack{T\geq1\\ t\leq\theta T}}\Qt_T(\Qt_T[Z_T(t)|\Gt_T] > K) \leq Q\left(\sum_{j=1}^\infty (A_j-1) e^{\pi^2/8\varepsilon^2 - \eta S_j} + e^{\pi^2/8\varepsilon^2} > K\right).\]
Hence our claim holds if the random variable $\sum_{j=1}^\infty (A_j-1) e^{- \eta S_j}$ can be shown to be $Q$-almost surely finite. Now for any $\gamma\in(0,1)$,
\begin{align*}
Q(\sum_n (A_n-1) e^{-\eta S_n} = \infty) &\leq Q(A_n e^{-\eta S_n} > \gamma^n \hbox{ infinitely often})\\
&\leq Q\left(\frac{\log A_n}{n} > \log \gamma + \frac{\eta S_n}{n} \hbox{ infinitely often}\right).
\end{align*}
By the strong law of large numbers, $S_n/n \to 1/r(m+1)$ almost surely under $Q$; so if $\gamma\in(\exp(-\eta/r(m+1)),1)$ then the quantity above is no larger than
\[Q\left(\limsup_{n\to\infty} \frac{\log A_n}{n} > 0\right).\]
But this quantity is zero by Borel-Cantelli: indeed, for any $T$,
\begin{multline*}
\sum_n Q\left(\frac{\log A_n}{n}>\varepsilon\right) = \sum_n Q(\log  A_1 > \varepsilon n )\\
\leq \int_0^\infty Q(\log  A_1 \geq \varepsilon x) dx = Q\left[\frac{\log  A_1}{\varepsilon}\right]
\end{multline*}
which is finite for any $\varepsilon>0$ since (by direct calculation from the distribution of $A_1$ under $Q$) $Q[\log  A_1] = \Pt[A \log  A]<\infty$ (this was one of our assumptions at the beginning of the article). Thus our claim holds.

Now choose $M>0$ such that $1/M < \delta/2$; then for $K$ chosen as above, and any $T\geq1$, $t\leq\theta T$,
\begin{align*}
\Qt_T(Z_T(t) > MK) &\leq \Qt_T( Z_T(t) > MK, \hsl \Qt_T[Z_T(t)|\Gt_T]\leq K )\\
&\hspace{45mm} + \Qt_T(\Qt_T[Z_T(t)|\Gt_T] > K )\\
&\leq \Qt_T\left[\frac{Z_T(t)}{MK}\ind_{\{\Qt_T[Z_T(t)|\Gt_T] \leq K\}}\right] + \delta/2\\
&= \Qt_T\left[\frac{\Qt_T[Z_T(t)|\Gt_T]}{MK}\ind_{\{\Qt_T[Z_T(t)|\Gt_T] \leq K\}}\right] + \delta/2\\
&\leq 1/M + \delta/2 \leq \delta.
\end{align*}
Thus, setting $K'=MK$, for any $T\geq1$, $t\leq\theta T$,
\[\Pb[Z_T(t)\ind_{\{Z_T(t)>K'\}}] = \Qt_T(Z_T(t)>K') \leq \delta.\]
Since $\delta>0$ was arbitrary, the proof is complete.
\end{proof}

As our final result in this section we link explicitly the martingales $Z_T$ with the number of particles $N_T$.

\begin{lem}\label{int_by_parts_lem}
For any $\delta>0$, if $f\in C^2[0,1]$, $f(0)=0$ and $\varepsilon$ is small enough then
\[Z_T(\theta T) \leq |N_T(f,\varepsilon,\theta)|\exp\left(\frac{\pi^2\theta}{8\varepsilon^2 T} - rm\theta T + \frac{T}{2}\int_0^\theta f'(s)^2 ds + \delta T \right).\]
\end{lem}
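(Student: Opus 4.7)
The plan is direct: expand $Z_T(\theta T)$ using its explicit representation as a sum over $N_T(\theta T) = N_T(f,\varepsilon,\theta)$, estimate each summand in a uniform way using the cosine bound and Lemma \ref{int_lem}, and then swallow the resulting $O(\varepsilon T)$ error into $\delta T$ by choosing $\varepsilon$ small. The identity
\[Z_T(\theta T) = \sum_{u\in N_T(f,\varepsilon,\theta)} V_T^{(u)}(\theta T)\, e^{-rm\theta T}\]
was already established, so the whole task reduces to an upper bound on $V_T^{(u)}(\theta T)$ that does not depend on the particle $u$.

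First I would bound the cosine factor trivially by $1$ and note that the leading prefactor evaluates to $\exp(\pi^2\theta/8\varepsilon^2 T)$ at time $\theta T$. It remains to control the exponent $\int_0^{\theta T} f'(s/T)\,dX_u(s) - \tfrac{1}{2}\int_0^{\theta T} f'(s/T)^2\,ds$. Here the key observation is that for any $u \in N_T(f,\varepsilon,\theta)$ the particle path stays in the $\varepsilon T$-tube about $Tf(\cdot/T)$, so Lemma \ref{int_lem} applies and gives
\[\int_0^{\theta T} f'(s/T)\,dX_u(s) \;\leq\; \int_0^{\theta T} f'(s/T)^2\,ds \,+\, 2\varepsilon T\int_0^\theta |f''(s)|\,ds \,+\, \varepsilon T|f'(0)|.\]
A change of variables $u=s/T$ converts $\int_0^{\theta T} f'(s/T)^2\,ds$ into $T\int_0^\theta f'(s)^2\,ds$, so the exponent is bounded above by
\[\tfrac{T}{2}\int_0^\theta f'(s)^2\,ds \,+\, \varepsilon T\Bigl(2\int_0^\theta |f''(s)|\,ds + |f'(0)|\Bigr).\]

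Combining these bounds and multiplying by $e^{-rm\theta T}$, every term in the sum defining $Z_T(\theta T)$ is at most
\[\exp\!\left(\tfrac{\pi^2\theta}{8\varepsilon^2 T} - rm\theta T + \tfrac{T}{2}\int_0^\theta f'(s)^2\,ds + \varepsilon T\Bigl(2\int_0^\theta |f''(s)|\,ds + |f'(0)|\Bigr)\right),\]
so after summing over $u\in N_T(f,\varepsilon,\theta)$ it only remains to absorb the last bracket into $\delta T$. Since $f\in C^2[0,1]$, the quantity $2\int_0^\theta |f''(s)|\,ds + |f'(0)|$ is a finite constant depending only on $f$, and choosing $\varepsilon$ small enough that $\varepsilon$ times this constant is at most $\delta$ finishes the proof. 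There is no real obstacle — the only things to be careful about are the change of variables in the $f'(s/T)^2$ integral and the fact that Lemma \ref{int_lem} must be applied to particles in $N_T(f,\varepsilon,\theta)$, which is precisely the set indexing the sum for $Z_T(\theta T)$.
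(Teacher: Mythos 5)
Your proof is correct and follows exactly the route the paper intends (the paper's own proof is just the one-line ``plug Lemma~\ref{int_lem} into the definition of $Z_T(\theta T)$''), with the cosine bounded by $1$, the change of variables $s\mapsto s/T$, and the $\varepsilon T\bigl(2\int_0^\theta|f''|\,ds+|f'(0)|\bigr)$ error absorbed into $\delta T$ by taking $\varepsilon$ small. Nothing to add.
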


\begin{proof}
Simply plugging the result of Lemma \ref{int_lem} into the definition of $Z_T(\theta T)$ gives the desired inequality.
\end{proof}

We note here that, in fact, a similar bound can be given in the opposite direction, so that $N_T(f,\varepsilon/2,\theta)$ is dominated by $Z_T(\theta T)$ multiplied by some deterministic function of $T$. We will not need this bound, but it is interesting to note that the study of the martingales $Z_T$ is in a sense equivalent to the study of the number of particles $N_T$.

\section{The lower bound}\label{lower_bound_section}

\subsection{The heuristic for the lower bound}
We want to show that $N_T(f,\varepsilon,\theta)$ cannot be too small for large $T$. For $f\in C[0,1]$ and $\theta\in[0,1]$, define
\[J(f,\theta):=\left\{\begin{array}{ll}rm\theta - \frac{1}{2}\int_0^\theta f'(s)^2 ds & \hbox{ if } f\in H_1\\ -\infty & \hbox{ otherwise.}\end{array}\right.\]
We note that $J$ resembles our rate function $K$, but without the truncation at the extinction time $\theta_0$. We shall work mostly with the simpler object $J$, before deducing our result involving $K$ at the very last step. We now give a short heuristic to describe our route through the proof of the lower bound.

\vspace{3mm}

\emph{Step 1.} Consider a small time $\eta T$. How many particles are in $N_T(f,\varepsilon,\eta)$? If $\eta$ is much smaller than $\varepsilon$, then (with high probability) no particle has had enough time to reach anywhere near the edge of the tube (approximately distance $\varepsilon T$ from the origin) before time $\eta T$. Thus, with high probability,
\[|N_T(f,\varepsilon,\eta)| = |N(\eta T)| \approx \exp(rm\eta T).\]

\emph{Step 2.} Given their positions at time $\eta T$, the particles in $N_T(f,\varepsilon,\eta)$ act independently. Each particle $u$ in this set thus draws out an independent branching Brownian motion. Let $N_T(u,f,\varepsilon,\theta)$ be the set of descendants of $u$ that are in $N_T(f,\varepsilon,\theta)$. How big is this set? Since $\eta$ is very small, each particle $u$ is close to the origin. Thus we may hope to find some $q<1$ such that
\[\Pb\left(|N_T(u,f,\varepsilon,\theta)| < \exp(J(f,\theta)T - \delta T)\right) \leq q.\]
(Of course, in reality we believe that this quantity will be exponentially small --- but to begin with, the constant bound can be shown more readily.)

\vspace{2mm}

\emph{Step 3.} If $N_T(f,\varepsilon,\theta)$ is to be small, then each of the sets $N_T(u,f,\varepsilon,\theta)$ for $u\in N_T(f,\varepsilon,\eta)$ must be small. Thus
\[\Pb\left(|N_T(f,\varepsilon,\theta)| < \exp(J(f,\theta)T-\delta T)\right) \lesssim q^{\hsl\exp(rm\eta T)},\]
and we may apply Borel-Cantelli to deduce our result along lattice times (that is, times $T_j$, $j\geq0$ such that there exists $\tau>0$ with $T_j - T_{j-1} = \tau$ for all $j\geq1$).

\emph{Step 4.} We carry out a simple tube-reduction argument to move to continuous time.  The idea here is that if the result were true on lattice times but not in continuous time, the number of particles in $N_T(f,\varepsilon,\theta)$ must fall dramatically at infinitely many non-lattice times. We simply rule out this possibility using standard properties of Brownian motion.

\vspace{3mm}

The most difficult part of the proof is Step 2. However, the spine results of Section \ref{spine_section} will simplify our task significantly.

\subsection{The proof of the lower bound}

We begin with Step 1 of our heuristic, considering the size of $N_T(f,\varepsilon,\eta)$ for small $\eta$.

\begin{lem}\label{rightmost_lem}
For any continuous $f$ with $f(0)=0$ and any $\varepsilon>0$, there exist $\eta>0$, $k>0$ and $T_1$ such that
\[\Pb(\exists u\in N(\eta T) : u\not\in N_T(f,\varepsilon/2,\eta)) \leq e^{-kT} \hs \hs \forall T\geq T_1.\]
\end{lem}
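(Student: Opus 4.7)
The plan is to use the continuity of $f$ together with $f(0)=0$ to reduce the statement to a tail estimate on a single Brownian motion, via the many-to-one lemma. Since $f$ is continuous at $0$ with $f(0)=0$, I would first pick $\eta>0$ small enough that $|f(s)|<\varepsilon/4$ for all $s\in[0,\eta]$. Then $|Tf(t/T)|<\varepsilon T/4$ for all $t\in[0,\eta T]$, so that whenever a particle $u\in N(\eta T)$ fails to lie in $N_T(f,\varepsilon/2,\eta)$ there must be some $t\le\eta T$ with $|X_u(t)|>\varepsilon T/4$. Hence
\[\Pb(\exists u\in N(\eta T): u\notin N_T(f,\varepsilon/2,\eta)) \leq \Eb\!\left[\sum_{u\in N(\eta T)}\ind_{\{\sup_{t\le\eta T}|X_u(t)|>\varepsilon T/4\}}\right].\]

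Next I would apply the many-to-one lemma: the indicator on the right-hand side depends only on the path of $u$ (which is a Brownian path under $\Pb$), so the expectation equals $e^{rm\eta T}\tilde\Pb(\sup_{t\le\eta T}|\xi_t|>\varepsilon T/4)$. Under $\Pt$ the spine $\xi$ is a standard Brownian motion, so the reflection principle combined with the standard Gaussian tail bound yields
\[\tilde\Pb\Bigl(\sup_{t\le\eta T}|\xi_t|>\tfrac{\varepsilon T}{4}\Bigr)\le 4\tilde\Pb\bigl(\xi_{\eta T}>\tfrac{\varepsilon T}{4}\bigr)\le 4\exp\!\left(-\frac{\varepsilon^2 T}{32\eta}\right).\]
Combining these two inequalities,
\[\Pb(\exists u\in N(\eta T): u\notin N_T(f,\varepsilon/2,\eta)) \leq 4\exp\!\left(rm\eta T-\frac{\varepsilon^2 T}{32\eta}\right).\]

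Finally, I would shrink $\eta$ (if necessary) so that $rm\eta-\varepsilon^2/(32\eta)<0$, and set, say, $k:=\tfrac12\bigl(\varepsilon^2/(32\eta)-rm\eta\bigr)>0$. Choosing $T_1$ large enough to absorb the leading constant $4$ gives the bound $e^{-kT}$ for all $T\geq T_1$, as required.

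The only mild point of attention is the step reducing to a Brownian exit estimate, which requires ensuring the event "some ancestor of $u$ strayed" is captured by the single path $(X_u(s))_{s\le\eta T}$ — but this is automatic, since $X_u(s)$ for $s\le\eta T$ is precisely the position of the ancestor of $u$ alive at time $s$. There is no real obstacle; the choice of $\eta$ small is the only place where continuity of $f$ at $0$ is needed, and the Gaussian exponential easily dominates the $e^{rm\eta T}$ growth once $\eta$ is taken sufficiently small relative to $\varepsilon$.
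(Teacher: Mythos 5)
Your proof is correct and follows essentially the same route as the paper's: choose $\eta$ small so $|f|<\varepsilon/4$ on $[0,\eta]$, reduce to the event that the spine exceeds $\varepsilon T/4$ via the many-to-one lemma, and beat the $e^{rm\eta T}$ population growth with the Gaussian tail once $\eta$ is small enough. The only cosmetic difference is that you invoke the reflection principle with the cruder bound $4\exp(-\varepsilon^2 T/(32\eta))$ while the paper keeps the explicit prefactor from the Gaussian density; both give the required exponential decay.
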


\begin{proof}
Choose $\eta$ small enough that $\sup_{s\in[0,\eta]} |f(s)|<\varepsilon/4$. Then, using the many-to-one lemma and standard properties of Brownian motion,
\begin{align*}
\Pb&(\exists u\in N(\eta T) : u\not\in N_T(f,\varepsilon/2,\eta))\\
& = \Pb\left(\exists u\in N(\eta T) : \sup_{s\leq\eta}|X_u(sT) - Tf(s)|\geq \varepsilon T/2\right)\\
& \leq \Pb\left[\sum_{u\in N(\eta T)}\ind_{\{\sup_{s\leq\eta}|X_u(sT) - Tf(s)|\geq \varepsilon T/2\}}\right]\\
& \leq e^{rm\eta T} \Pt\left(\sup_{s\leq\eta}|\xi_{sT} - Tf(s)| \geq \varepsilon T/2\right)\\
& \leq e^{rm\eta T} \Pt\left(\sup_{s\leq\eta}|\xi_{sT}|\geq \varepsilon T/4\right)\\
%& \leq 2e^{rm\eta T} \Pt\left(\sup_{s\leq\eta}\xi_{sT}\geq \varepsilon T/4\right)\\
%& = 2e^{rm\eta T} \Pt(|\xi_{\eta T}|\geq \varepsilon T/4)\\
%& = 2e^{rm\eta T} \Pt(|\xi_1|\geq \varepsilon \sqrt T / 4\sqrt\eta)\\
&\leq \frac{16\sqrt\eta e^{rm\eta T - \varepsilon^2 T/32\eta}}{\varepsilon\sqrt{2\pi T}}.
\end{align*}
A suitably small choice of $\eta$ gives the exponential decay required.
\end{proof}

We now move on to Step 2, using the results of Section \ref{spine_section} to bound the probability of having a small number of particles strictly below 1. The bound given is extremely crude, and there is much room for manoeuvre in the proof, but any improvement would only add unnecessary detail.

\begin{lem}\label{bound_prob_below_1_lem}
If $f\in C^2[0,1]$ and $J(f,s)>0$ $\forall s\in(0,\theta]$, then for any $\varepsilon>0$ and $\delta>0$ there exists $T_0\geq0$ and $q<1$ such that
\[\Pb\left(|N_T(f,\varepsilon,\theta)|<e^{J(f,\theta)T-\delta T}\right)\leq q \hs\hs \forall T\geq T_0.\]
\end{lem}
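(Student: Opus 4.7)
The plan is to combine Lemma \ref{int_by_parts_lem}, which lower-bounds $|N_T(f,\varepsilon,\theta)|$ in terms of the mean-one martingale $Z_T(\theta T)$, with the uniform integrability of $\{Z_T(\theta T)\}$ provided by Proposition \ref{UI}. Since Proposition \ref{UI} requires the extra hypothesis $f'(0)=0$, which is not assumed here, a short reduction is needed first.

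\emph{Reduction to $f'(0)=0$.} Given $\varepsilon, \delta > 0$, I construct by smoothing $f$ near the origin a function $g\in C^2[0,1]$ with $g(0)=g'(0)=0$, $\|g-f\|_\infty\leq \varepsilon/2$, $J(g,\theta)\geq J(f,\theta)-\delta/4$, and $J(g,\phi)>0$ for every $\phi\in(0,\theta]$. The strict positivity on $(0,\theta]$ comes essentially for free: for $\phi$ bounded away from $0$, $J(g,\phi)$ is close to $J(f,\phi)>0$; for $\phi$ near $0$, $g'(0)=0$ together with continuity of $g'$ forces $\int_0^\phi g'(s)^2 ds = o(\phi)$, so $J(g,\phi) = rm\phi - o(\phi) > 0$. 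Since $\|g-f\|_\infty \leq \varepsilon/2$, we have the inclusion $N_T(g,\varepsilon/2,\theta)\subseteq N_T(f,\varepsilon,\theta)$, so it suffices to prove the lemma with $g$ in place of $f$ and $\varepsilon/2$ in place of $\varepsilon$.

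\emph{Main step.} Let $Z_T^g$ denote the martingale of Section \ref{spine_section} associated with the pair $(g,\varepsilon/2)$. Lemma \ref{int_by_parts_lem} applied to $g$ (with $\delta/4$ in place of $\delta$) rearranges to
\[
|N_T(g,\varepsilon/2,\theta)| \geq Z_T^g(\theta T) \exp\bigl(J(g,\theta)T - (\delta/4)T - \pi^2\theta/(2\varepsilon^2 T)\bigr).
\]
Thus on the event $\{Z_T^g(\theta T)>c\}$, for $T$ large enough that the $1/T$ term is negligible and $c\geq e^{-\delta T/4}$, we obtain $|N_T(g,\varepsilon/2,\theta)|\geq e^{J(f,\theta)T-\delta T}$. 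By Proposition \ref{UI}, $\{Z_T^g(\theta T):T\geq 1\}$ is uniformly integrable under $\Pb$ with common mean $1$, and a standard argument then furnishes constants $c,p>0$ with $\Pb(Z_T^g(\theta T) > c) \geq p$ uniformly in $T$: choose $K$ so that $\Pb[Z_T^g(\theta T)\ind_{\{Z_T^g(\theta T)>K\}}]\leq 1/4$ for all $T$, then split $1 = \Pb[Z_T^g(\theta T)]$ over $\{Z_T^g\leq 1/4\}$, $\{1/4<Z_T^g\leq K\}$, $\{Z_T^g>K\}$ to get $\Pb(Z_T^g(\theta T) > 1/4)\geq 1/(2K)$. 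Taking $q=1-p < 1$ and $T_0$ large enough then completes the proof.

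\emph{Main obstacle.} Essentially all the probabilistic content has already been packaged into Proposition \ref{UI} and Lemma \ref{int_by_parts_lem}; the only real technical point is the construction of $g$, where one must balance the closeness of $g$ to $f$ against simultaneously maintaining $J(g,\theta)\approx J(f,\theta)$ and the positivity of $J(g,\cdot)$ throughout $(0,\theta]$.
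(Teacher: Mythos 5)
Your argument reproduces the paper's proof almost verbatim: reduce to $f'(0)=0$ by smoothing $f$ near the origin, apply Lemma \ref{int_by_parts_lem} to lower-bound $|N_T|$ by $Z_T(\theta T)$, and use the uniform integrability from Proposition \ref{UI} together with the standard three-way split of $1=\Eb[Z_T(\theta T)]$ to extract a uniform $\Pb(Z_T(\theta T)>c)\geq p$. The one point you gloss over is that both Lemma \ref{int_by_parts_lem} and Proposition \ref{UI} only apply for $\varepsilon$ sufficiently small, while the lemma claims the bound for arbitrary $\varepsilon>0$; the paper closes this by proving the bound for all small $\varepsilon$ and then observing that $|N_T(f,\varepsilon,\theta)|$ is nondecreasing in $\varepsilon$, so the conclusion propagates to larger tubes for free.
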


\begin{proof}
Note that by Lemma \ref{int_by_parts_lem} for small enough $\varepsilon>0$ and large enough $T$,
\[|N_T(f,\varepsilon,\theta)|e^{-J(f,\theta)T+\delta T/2} \geq Z_T(\theta T)\]
and hence
\[\Pb\left(|N_T(f,\varepsilon,\theta)| < e^{J(f,\theta)T-\delta T}\right) \leq \Pb\left(Z_T(\theta T) < e^{-\delta T/2}\right).\]
Suppose first that $f'(0)=0$. Then, again for small enough $\varepsilon$, by Proposition \ref{UI} the set $\{Z_T(\theta T), T\geq 1, t\in[1,\theta T]\}$ is uniformly integrable. Thus we may choose $K$ such that
\[\sup_{T\geq1}\Eb[Z_T(\theta T) \ind_{\{Z_T(\theta T) > K\}}] \leq 1/4,\]
and then
\begin{align*}
1 = \Eb[Z_T(\theta T)] &= \Eb[Z_T(\theta T)\ind_{\{Z_T(\theta T)\leq 1/2\}}] + \Eb[Z_T(\theta T)\ind_{\{1/2< Z_T(\theta T)\leq K\}}]\\
&\hspace{20mm} + \Eb[Z_T(\theta T)\ind_{\{Z_T(\theta T) > K\}}]\\
&\leq 1/2 + K\Pb(Z_T(\theta T) > 1/2 ) + 1/4
\end{align*}
so that
\[\Pb(Z_T(\theta T) > 1/2) \geq 1/4K.\]
Hence for large enough $T$,
\[\Pb\left(|N_T(f,\varepsilon,\theta)| < e^{J(f,\theta)T-\delta T}\right) \leq 1 - 1/4K.\]
This is true for all small $\varepsilon>0$; but increasing $\varepsilon$ only increases $|N_T(f,\varepsilon,\theta)|$ so the statement holds for all $\varepsilon>0$. Finally, if $f'(0)\neq0$ then choose $g\in C^2[0,\theta]$ such that $g(0)=g'(0)=0$, $\sup_{s\leq\theta}|f-g|\leq\varepsilon/2$, $J(g,\phi)>0$ for all $\phi\leq\theta$ and $J(g,\theta)> J(f,\theta)-\delta/2$ (for small $\eta$, the function
\[g(t):= \left\{\begin{array}{ll}f(t) + at + bt^2 +ct^3 +dt^4 & \hbox{if }t\in[0,\eta)\\ f(t) & \hbox{if } t\in[\eta,1]\end{array}\right.\]
will work for suitable $a,b,c,d\in\Rb$). Then as above we may choose $K$ such that
\[\Pb(|N_T(f,\varepsilon,\theta)| < e^{J(f,\theta)T-\delta T}) \leq \Pb(|N_T(g,\varepsilon/2,\theta)| < e^{J(g,\theta)T-\delta T/2}) \leq 1-1/4K\]
as required.\end{proof}

Our next result runs along integer times --- these times are sufficient for our needs, although the following proof would in fact work for any lattice times.

\begin{prop}\label{lower_prop}
Suppose that $f\in C^2[0,1]$ and $J(f,s)>0$ $\forall s\in(0,\theta]$. Then
\[\liminf_{\substack{j\to\infty\\j\in\mathbb{N}}}\frac{1}{j}\log |N_j(f,\varepsilon,\theta)|\geq J(f,\theta)\]
almost surely.
\end{prop}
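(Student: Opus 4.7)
My plan is to execute the four-step heuristic above at integer times, combining Lemmas~\ref{rightmost_lem} and~\ref{bound_prob_below_1_lem} via independence of descendants and Borel--Cantelli. Fix $\delta>0$; the goal is to show $|N_j(f,\varepsilon,\theta)|\geq e^{(J(f,\theta)-\delta)j}$ for all sufficiently large $j\in\mathbb{N}$, almost surely. I pick $\eta\in(0,\theta)$ small enough that three things hold simultaneously: (i)~Lemma~\ref{rightmost_lem} gives a summable bound $\Pb(G_j^c)\leq e^{-kj}$ on the event $G_j:=\{|N_j(f,\varepsilon/2,\eta)|=|N(\eta j)|\}$; (ii)~$J(f,\eta)<\delta/3$; and (iii)~the reparametrised target $\tilde h(\sigma):=(f(\eta+(\theta-\eta)\sigma)-f(\eta))/(\theta-\eta)$ lies in $C^2[0,1]$, vanishes at $0$, and satisfies $J(\tilde h,s)>0$ for every $s\in(0,1]$.

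By~(i) and Borel--Cantelli, $G_j$ holds for all but finitely many $j$ almost surely. Conditionally on $\mathcal{F}_{\eta j}$, the branching Markov property makes the descendants of distinct particles in $N_j(f,\varepsilon/2,\eta)$ independent BBMs. For a particle $u$ at position $x:=X_u(\eta j)$, the slack $|x-jf(\eta)|<\varepsilon j/2$ is exactly what is needed so that any descendant $v$ whose centred, rescaled trajectory stays within $\varepsilon/(2(\theta-\eta))$ of $\tilde h$ on horizon $(\theta-\eta)j$ automatically lies in $N_j(f,\varepsilon,\theta)$. Applying Lemma~\ref{bound_prob_below_1_lem} to the descendant BBM with target $\tilde h$ and suitably chosen tube radius and tolerance produces, uniformly in $u$ and $j$, a constant $q<1$ such that, conditionally on $\mathcal{F}_{\eta j}$, the particle $u$ contributes at least $e^{(J(f,\theta)-2\delta/3)j}$ descendants to $N_j(f,\varepsilon,\theta)$ with probability at least $1-q$, using the identity $J(\tilde h,1)(\theta-\eta)=J(f,\theta)-J(f,\eta)$ together with~(ii).

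By conditional independence across particles, the probability that every $u\in N_j(f,\varepsilon/2,\eta)$ fails is at most $q^{|N(\eta j)|}$ on $G_j$. Since $A\geq 2$ and branching occurs at rate $r$, an elementary ODE argument for the generating function $\Eb[q^{|N(t)|}]$ gives the exponential bound $\Eb[q^{|N(t)|}]\leq C(q)e^{-rt}$. Combined with~(i), this yields $\sum_j\Pb(A_j)<\infty$ for $A_j:=\{|N_j(f,\varepsilon,\theta)|<e^{(J(f,\theta)-\delta)j}\}$, and standard Borel--Cantelli gives $\Pb(A_j\text{ i.o.})=0$. Taking $\delta$ through a countable sequence tending to zero delivers the liminf bound.

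The main obstacle is verifying~(iii). A direct calculation gives $(\theta-\eta)J(\tilde h,s)=J(f,\phi)-J(f,\eta)$ with $\phi=\eta+(\theta-\eta)s\in(\eta,\theta]$, so one needs $J(f,\phi)>J(f,\eta)$ uniformly in $\phi$. This follows from the strict positivity of $J(f,\cdot)$ on $(0,\theta]$, compactness, continuity, and the fact that $J(f,\eta)\to 0$ as $\eta\to 0$, though the boundary case $|f'(0)|=\sqrt{2rm}$ may need the $C^2$-perturbation trick already used inside Lemma~\ref{bound_prob_below_1_lem}. Everything else is a clean Borel--Cantelli assembly of the preceding lemmas.
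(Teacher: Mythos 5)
Your proposal is correct and follows the same route as the paper: condition on $\Fg_{\eta j}$, use Lemma~\ref{rightmost_lem} to guarantee that essentially all of $N(\eta j)$ survives the first leg, apply Lemma~\ref{bound_prob_below_1_lem} to each independent descendant sub-tree to get a uniform $q<1$, multiply, bound $\Eb[q^{|N(\eta j)|}]$ by an exponentially decaying quantity, and finish with Borel--Cantelli. The only cosmetic difference is the reparametrisation: the paper keeps the scale $T$ and applies Lemma~\ref{bound_prob_below_1_lem} to a $C^2$ extension $g$ of $t\mapsto f(t+\eta)-f(\eta)$ with proportion $\theta-\eta$ and tube radius $\varepsilon/2$, whereas you rescale time onto $[0,1]$ via $\tilde h$ with the new scale $T'=(\theta-\eta)j$. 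The identity $(\theta-\eta)J(\tilde h,1)=J(f,\theta)-J(f,\eta)$ is exactly the paper's $J(g,\theta-\eta)=J(f,\theta)-J(f,\eta)$, so the two bookkeepings match.

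One point worth sharpening: the justification you offer for~(iii) --- ``compactness, continuity, $J(f,\eta)\to0$'' --- only handles $\phi$ bounded away from $0$; for $\phi$ close to $\eta$ it is not automatic that $J(f,\phi)>J(f,\eta)$, and the $C^2$-perturbation device inside Lemma~\ref{bound_prob_below_1_lem} (which serves to impose $g'(0)=0$) is not what fixes it. What does work is to \emph{select} $\eta$ rather than take it arbitrary: for $\eta_0>0$ let $c:=\min_{[\eta_0,\theta]}J(f,\cdot)>0$ and set $\eta:=\max\{s\in[0,\eta_0]:J(f,s)=c/2\}$; by the intermediate value theorem this $\eta$ exists, lies in $(0,\eta_0)$, and by maximality $J(f,\phi)>c/2=J(f,\eta)$ for all $\phi\in(\eta,\theta]$. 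Sending $\eta_0\to0$ gives arbitrarily small such $\eta$ and hence $J(f,\eta)\to0$. The paper's ``if $\eta$ is small enough'' is also silent on this, so you are not losing anything relative to the source, but since you flagged the issue explicitly the argument above closes it cleanly. Everything else --- the slack accounting giving tube radius $\varepsilon/(2(\theta-\eta))$, the conditional independence across $u\in N_j(f,\varepsilon/2,\eta)$, and the generating-function bound $\Eb[q^{|N(t)|}]\leq q/(q+(1-q)e^{rt})$ --- matches the paper and is correct.
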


\begin{proof}
For any particle $u$, define
\begin{align*}
N_T(u,f,\varepsilon,\theta) &:= \{v\in N(\theta T) : u\leq v, \hs |X_v(t)-Tf(t/T)|<\varepsilon T \hs \forall t\in[0,\theta T]\}\\
& \hsl = \{v : u\leq v\} \cap N_T(f,\varepsilon,\theta),
\end{align*}
the set of descendants of $u$ that are in $N_T(f,\varepsilon,\theta)$.
Then for $\delta >0$ and $\eta \in[0,\theta]$,
\begin{align*}
\Pb&\left(\left.|N_T(f,\varepsilon,\theta)| < e^{J(f,\theta)T-\delta T} \right| \Fg_{\eta T}\right)\\
&\leq \prod_{u\in N_T(f,\varepsilon/2,\eta)}\Pb\left(\left.|N_T(u,f,\varepsilon,\theta)|<e^{J(f,\theta)T - \delta T} \right| \Fg_{\eta T}\right)\\
&\leq \prod_{u\in N_T(f,\varepsilon/2,\eta)}\Pb\left(|N_T(g,\varepsilon/2,\theta-\eta)|<e^{J(f,\theta)T-\delta T}\right)
\end{align*}
since $\{|N_T(u,f,\varepsilon,\theta)| : u\in N_T(f,\varepsilon/2,\eta)\}$ are independent random variables, and where $g:[0,1]\to\Rb$ is any twice continuously differentiable extension of the function
\[\begin{array}{rrcl} \bar g: & [0,\theta-\eta] &\to& \Rb\\
                          & t               &\to& f(t+\eta)-f(\eta).\end{array}\]
If $\eta$ is small enough, then
\[|J(f,\theta) - J(g,\theta - \eta)|<\delta/2\]
and
\[J(g,s) > 0 \hs\hs \forall s\in(0,\theta-\eta].\]
Hence, applying Lemma \ref{bound_prob_below_1_lem}, there exists $q<1$ such that for all large $T$,
\begin{multline*}
\Pb\left(|N_T(g,\varepsilon/2,\theta-\eta)|< e^{J(f,\theta)T-\delta T}\right)\\
\leq \Pb\left(|N_T(g,\varepsilon/2,\theta-\eta)| < e^{J(g,\theta-\eta)T-\delta T/2}\right) \leq q.
\end{multline*}
Thus for large $T$,
\begin{equation}\label{cond_prob}
\Pb\left(\left.|N_T(f,\varepsilon,\theta)| < e^{J(f,\theta)T-\delta T} \right| \Fg_{\eta T}\right) \leq q^{|N_T(f,\varepsilon/2,\eta)|}.
\end{equation}
Now, recalling that $N(t)$ is the \emph{total} number of particles alive at time $t$, it is well-known (and easy to calculate) that for $\alpha\in(0,1)$,
\[\Eb\left[\alpha^{|N(t)|}\right] \leq \frac{\alpha}{\alpha + (1-\alpha)e^{rt}}\]
(in fact this is exactly $\Eb[\alpha^{|N(t)|}]$ in the case of strictly dyadic branching).
Taking expectations in (\ref{cond_prob}), and then applying Lemma \ref{rightmost_lem}, for small $\eta$ we can get
\begin{align*}
\Pb&\left(|N_T(f,\varepsilon,\theta)| < e^{J(f,\theta)T-\delta T} \right)\\
&\leq \Pb\left(\exists u\in N(\eta T) : u\not\in N_T(f,\varepsilon/2,\eta)\right) + \Eb\left[q^{|N(\eta T)|}\right]\\
&\leq e^{-kT} + \frac{q}{q + (1-q)e^{r\eta T}}
\end{align*}
for some $k>0$ and all large enough $T$. The Borel-Cantelli lemma now tells us that
\[\Pb\left(\liminf_{j\to\infty}\frac{1}{j}\log|N_j(f,\varepsilon,\theta)| < J(f,\theta)-\delta \right) = 0,\]
and taking a union over $\delta>0$ gives the result.
\end{proof}

We note that our estimate on $\Eb[\alpha^{|N(t)|}]$ may not hold if we allowed the possibility of death with no offspring. In this case a more sophisticated estimate is required, taking into account the probability that the process becomes extinct.

We look now at moving to continuous time using Step 4 of our heuristic. For simplicity of notation, we break with convention by defining
\[\|f\|_\theta:=\sup_{s\in[0,\theta]}|f(s)|\]
for $f\in C[0,\theta]$ or $f\in C[0,1]$ (on this latter space, $\|\cdot\|_\theta$ is not a norm, but this will not matter to us).

\begin{prop}\label{tube_reduction_prop}
Suppose that $f\in C^2[0,1]$ and $J(f,s)>0$ $\forall s\in(0,\theta]$. Then
\[\liminf_{T\to\infty}\frac{1}{T}\log |N_T(f,\varepsilon,\theta)|\geq J(f,\theta)\]
almost surely.
\end{prop}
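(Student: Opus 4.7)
The plan is to bootstrap from Proposition \ref{lower_prop} (which gives the liminf bound along integer times) to continuous $T$ by a straightforward ``tube matching'' argument: for $T\in[j,j+1]$, any particle already inside the \emph{half-tube} of size $\varepsilon/2$ at time $\theta j$ will, provided it behaves nicely on the short interval $[\theta j,\theta(j+1)]$, still lie inside the full tube of size $\varepsilon$ at time $\theta T$.

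First I would apply Proposition \ref{lower_prop} with $\varepsilon/2$ in place of $\varepsilon$ and with some $\delta'<\delta$, giving a.s.\
\[|N_j(f,\varepsilon/2,\theta)|\geq \exp\bigl((J(f,\theta)-\delta')j\bigr)\]
for all large integers $j$. For each $u\in N(\theta j)$ define $A_u$ to be the event that $u$ experiences no branching event during $[\theta j,\theta(j+1)]$ and that $\sup_{s\in[\theta j,\theta(j+1)]}|X_u(s)-X_u(\theta j)|\leq 1$. By the strong Markov property at time $\theta j$, the $A_u$ are conditionally independent given $\Fg_{\theta j}$ and each has the same (positive, deterministic) probability $p=\Pb(A_u)>0$, independent of $j$.

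The key (purely deterministic) step is the tube-matching estimate: for $T\in[j,j+1]$ and $u\in N_j(f,\varepsilon/2,\theta)\cap A_u$, one checks that $u\in N_T(f,\varepsilon,\theta)$ once $j$ is large. For $t\leq\theta j$ one writes
\[|X_u(t)-Tf(t/T)|\leq |X_u(t)-jf(t/j)|+|jf(t/j)-Tf(t/T)|,\]
and the second term is $O(1)$ uniformly in $t$ using $|T-j|\leq 1$ and $f\in C^2$; for $t\in(\theta j,\theta T]$, on $A_u$ one gains an extra displacement bounded by $1$, which is again $O(1)$. Since $\varepsilon T-\varepsilon j/2\to\infty$, these $O(1)$ errors are absorbed for all large $j$.

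It then remains to show that a positive fraction of $u\in N_j(f,\varepsilon/2,\theta)$ satisfy $A_u$ simultaneously for all large $j$. Conditional on $\Fg_{\theta j}$, the count $|\{u\in N_j(f,\varepsilon/2,\theta):A_u\}|$ is Binomial with parameters $|N_j(f,\varepsilon/2,\theta)|$ and $p$. Since $|N_j(f,\varepsilon/2,\theta)|$ grows exponentially, Chernoff plus Borel--Cantelli gives a.s.\ that this count is at least $(p/2)|N_j(f,\varepsilon/2,\theta)|$ for all large $j$. Combining, for every $T\in[j,j+1]$ with $j$ large,
\[|N_T(f,\varepsilon,\theta)|\geq (p/2)\exp\bigl((J(f,\theta)-\delta')j\bigr),\]
and taking $\tfrac{1}{T}\log$, letting $T\to\infty$, and then $\delta'\downarrow 0$ concludes the proof. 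The only place requiring any care is the deterministic tube-matching estimate, but it is elementary given $f\in C^2$; no further probabilistic input beyond Proposition \ref{lower_prop} and the conditional independence of the $A_u$ is needed.
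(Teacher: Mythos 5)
Your argument is correct but takes a genuinely different route from the paper's over the interval $T\in[j,j+1]$. The paper bounds the probability that \emph{any} particle alive during the interval moves more than $\varepsilon j/2$, using the many-to-one lemma together with the Gaussian maximal inequality; this gives an unconditional summable bound, so Borel--Cantelli shows that eventually the whole set $N_j(f,\varepsilon/2,\theta)$ transfers into $N_t(f,\varepsilon,\theta)$ for every $t\in[j,j+1]$, with no constant lost. You instead show that a positive fraction of the particles in $N_j(f,\varepsilon/2,\theta)$ survives $\theta$ time units without branching and without moving more than $1$, via conditional independence given $\Fg_{\theta j}$ and binomial concentration. This is sound, but the Borel--Cantelli step needs one extra word: the unconditional quantity $\Eb\bigl[\exp(-c\,|N_j(f,\varepsilon/2,\theta)|)\bigr]$ is not obviously summable (a priori $\Pb(|N_j(f,\varepsilon/2,\theta)|=0)$ is only bounded away from $1$), so you should either split your bad event on the $\Fg_{\theta j}$-measurable set $\{|N_j(f,\varepsilon/2,\theta)|\geq e^{(J(f,\theta)-\delta')j}\}$, whose complement occurs only finitely often by Proposition \ref{lower_prop}, or invoke a conditional (L\'evy) form of Borel--Cantelli, since $\sum_j \Pb(E_j\mid\Fg_{\theta j})<\infty$ a.s. With that spelled out the proof closes. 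The deterministic tube-matching estimate you give is essentially the same as the paper's and needs only a Lipschitz bound on $f$. On balance the paper's version is slightly lighter --- it avoids the conditional Borel--Cantelli and loses no constant --- while yours, being phrased as a survival estimate rather than a ``no ancestral line escapes'' estimate, is a touch easier to adapt to the variant where particles may die childless.
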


\begin{proof}
We claim first that for large enough $j\in\mathbb{N}$,
\begin{multline*}
\left\{|N_j(f,\varepsilon,\theta)|>\inf_{t\in[j,j+1]}|N_t(f,2\varepsilon,\theta)|\right\} \\
 \subseteq \left\{\exists u\in N(\theta (j+1)) : \sup_{t\in[j,j+1]} |X_u(t)-X_u(j)| > \frac{\varepsilon j }{ 2}\right\}.
\end{multline*}
Indeed, if $v\in N_j(f,\varepsilon,\theta)$, $t\in[j,j+1]$ and $s\in[0,\theta t]$ then for any descendant $u$ of $v$ at time $\theta t$, 
\begin{align*}
|X_u(s) - tf(s/t)| &\leq |X_u(s) - X_u(s\wedge \theta j)| + |X_u(s\wedge \theta j)-jf((s\wedge \theta j) / j)|\\
&\hspace{10mm} + |jf((s\wedge \theta j) / j) - jf(s/t)| + |jf(s/t) - tf(s/t)|\\
&\leq |X_u(s) - X_u(s\wedge \theta j)| + \varepsilon j\\
&\hspace{10mm} + j\sup_{\substack{x,y\in[0,\theta]\\|x-y|\leq 1/j}} |f(x)-f(y)| + \|f\|_\theta\\
&\leq |X_u(s) - X_u(s\wedge \theta j)| + \frac{3\varepsilon}{2}j \hs\hs \hbox{ for large j;}
\end{align*}
so that if any particle is in $N_j(f,\varepsilon,\theta)$ but not in $N_t(f,2\varepsilon,\theta)$ then it must satisfy
\[\sup_{j\leq s\leq t} |X_u(s) - X_u(j)| \geq \varepsilon j/ 2.\]
This is enough to establish the claim, and we deduce via the many-to-one lemma and standard properties of Brownian motion that
\begin{align*}
&\Pb(|N_j(f,\varepsilon,\theta)|>\inf_{t\in[j,j+1]}|N_t(f,2\varepsilon,\theta)|)\\
&\leq \Pb\left(\exists u\in N(\theta(j+1)) : \sup_{t\in[j,j+1]} |X_u(t) - X_u(j)| \geq \varepsilon j/2 \right)\\
& = e^{rm\theta(j+1)} \Pt(\sup_{t\in[j,j+1]} |\xi_t - \xi_j| \geq \varepsilon j/2 )\\
%&\leq 2e^{rm\theta(j+1)}\Pt(|\xi_1|> \varepsilon j/2)\\
&\leq \frac{8}{\varepsilon j \sqrt{2\pi}}\exp(rm\theta(j+1) - \varepsilon^2 j^2/8).
\end{align*}
Since these probabilities are summable we may apply Borel-Cantelli to see that
\[\Pb(|N_j(f,\varepsilon,\theta)| > \inf_{t\in[j,j+1]}|N_t(f,2\varepsilon,\theta)| \hbox{ infinitely often} )=0.\]
Now,
\begin{multline*}
\Pb\left(\liminf_{T\to\infty}\frac{1}{T}\log |N_T(f,\varepsilon,\theta)| < J(f,\theta) \right)\\
\leq \Pb\left(\liminf_{j\to\infty}\frac{1}{j}\log |N_j(f,2\varepsilon,\theta)| < J(f,\theta)\right)\\
+ \Pb\left(\liminf_{j\to\infty} \frac{\inf_{t\in[j,j+1]}|N_t(f,\varepsilon,\theta)|}{|N_j(f,2\varepsilon,\theta)|} < 1 \right)
\end{multline*}
which is zero by Proposition \ref{lower_prop} and Borel-Cantelli.
\end{proof}

If we were including the possibility of death with no offspring then we would have to check that no particles in $N_j(f,\varepsilon,\theta)$ managed to reach the outside of the slightly altered $2\varepsilon$-tube and then die before time $j+1$. The only added difficulty would be in keeping track of notation.

We are now in a position to give our lower bound in full.

\begin{cor}\label{lower_bd}
For any open set $U\subseteq C[0,1]$ and $\theta\in[0,1]$, we have
\[\liminf_{T\to\infty}\frac{1}{T}\log|N_T(U,\theta)| \geq \sup_{f\in U}K(f,\theta)\]
almost surely.
\end{cor}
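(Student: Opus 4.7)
The plan is to reduce the statement to Proposition \ref{tube_reduction_prop} applied to a countable family of smooth, contracted approximants of paths in $U$. First observe that $K(f,\theta) \leq rm\theta$ for every $f$, so $S := \sup_{f \in U} K(f,\theta) \leq rm$; if $S = -\infty$ there is nothing to prove, so assume $S > -\infty$ and pick a sequence $(f_n)_{n\geq 1}$ in $U$ with $K(f_n,\theta) > S - 1/n$. Each $f_n$ then lies in $H_1$ with $\theta \leq \theta_0(f_n)$, so $\tfrac{1}{2}\int_0^s f_n'(u)^2\,du \leq rms$ on $[0,\theta]$.

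For each $n$ I would construct $g_n \in C^2[0,1]$ with $B(g_n,\varepsilon_n)\subseteq U$ for some $\varepsilon_n>0$, with $J(g_n,s) > 0$ for every $s \in (0,\theta]$, and $J(g_n,\theta) > S - 2/n$. The construction has three stages. \emph{Contract:} set $\tilde f_n := (1-\alpha_n)f_n$ for a small $\alpha_n > 0$; the bound above yields
\[
J(\tilde f_n,s) = rms - \tfrac{1}{2}(1-\alpha_n)^2\int_0^s f_n'(u)^2\,du \geq \alpha_n(2-\alpha_n)\,rms, \qquad s \in (0,\theta],
\]
while $J(\tilde f_n,\theta)\to K(f_n,\theta)$ as $\alpha_n\to 0$. \emph{Smooth:} since $\tilde f_n\in H_1$, integrating a polynomial $L^2$-approximation of $\tilde f_n'$ produces $h_n \in C^\infty[0,1]$ with $h_n(0)=0$, $\|h_n - \tilde f_n\|_\infty$ small and $\|h_n' - \tilde f_n'\|_{L^2}$ small; by Cauchy--Schwarz this makes $|J(h_n,s) - J(\tilde f_n,s)|$ uniformly small on $[0,\theta]$. \emph{Flatten:} the $O(s)$ lower bound on $J(\tilde f_n,s)$ is too weak near $s=0$ to survive the smoothing error, so following the polynomial trick from the proof of Lemma \ref{bound_prob_below_1_lem}, on a short interval $[0,\eta_n]$ add to $h_n$ a degree-$4$ polynomial whose coefficients make the resulting $g_n$ of class $C^2$, equal to $h_n$ on $[\eta_n,1]$, and satisfying $g_n(0)=g_n'(0)=0$.

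Since $g_n\in C^2$ and $g_n'(0)=0$, Taylor's theorem gives $g_n'(s)^2 = O(s^2)$ on $[0,\eta_n]$, and hence $J(g_n,s) = rms - O(s^3) > 0$ for all small $s>0$. A direct estimate shows that $\|g_n - h_n\|_\infty$ and $\int_0^{\eta_n}|g_n'^{\,2} - h_n'^{\,2}|\,ds$ tend to zero with $\eta_n$, so on $[\eta_n,\theta]$ strict positivity of $J(g_n,\cdot)$ is inherited from the quantitative bound $J(h_n,s)\geq \alpha_n rm\,\eta_n/2$ once the perturbations are chosen small enough. Selecting $\alpha_n$, then the smoothing parameter, then $\eta_n$ in that order gives $g_n$ with the desired properties; openness of $U$ provides the tube radius $\varepsilon_n$.

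Proposition \ref{tube_reduction_prop} applied to each $g_n$, together with the union of the countably many associated null sets, then gives almost surely that
\[
\liminf_{T\to\infty}\tfrac{1}{T}\log|N_T(U,\theta)| \geq \liminf_{T\to\infty}\tfrac{1}{T}\log|N_T(g_n,\varepsilon_n,\theta)| \geq J(g_n,\theta) > S - \tfrac{2}{n}
\]
for every $n$; letting $n\to\infty$ completes the proof. The main obstacle is the construction of $g_n$: the contraction yields only an $O(s)$ lower bound on $J$ near zero, which is not robust under generic $H^1$-smoothing, and the local $C^2$ polynomial flattening enforcing $g_n'(0)=0$ is essential to preserve strict positivity of $J$ on all of $(0,\theta]$.
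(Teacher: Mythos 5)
Your proof is correct and follows essentially the same route as the paper: reduce to a single well-chosen $f\in U$, contract it to make $J(\cdot,s)$ strictly positive on $(0,\theta]$, replace it by a $C^2$ approximant, and invoke the tube result (Proposition~\ref{tube_reduction_prop}) for an $\varepsilon$-ball contained in $U$. (The paper's own proof cites Proposition~\ref{lower_prop}, but since the claim is over continuous $T$ it is really Proposition~\ref{tube_reduction_prop} that is needed, as you correctly use.) Where you go further than the paper is the explicit three-stage construction of $g_n$. The paper compresses the smoothing into a parenthetical remark about piecewise-linear approximation, and does not address the possibility that a generic $C^2$ approximant to the contracted path could have a large derivative at $0$, which would destroy the $O(s)$ lower bound on $J$ near the origin. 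Your polynomial flattening step, which imports the trick from the proof of Lemma~\ref{bound_prob_below_1_lem}, is a clean way to close exactly this gap. Note, though, that enforcing $g_n'(0)=0$ is not needed per se by Proposition~\ref{tube_reduction_prop} (Lemma~\ref{bound_prob_below_1_lem} already handles $f'(0)\neq0$ internally); its real role in your argument is to guarantee $J(g_n,s)>0$ for small $s$, and it would be worth making that the stated purpose of the flattening rather than $g_n'(0)=0$ itself.
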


\begin{proof}
If $\sup_{f\in U}K(f,\theta)=-\infty$ then there is nothing to prove. Thus it suffices to consider the case when there exists $f\in U$ such that $\theta\leq\theta_0(f)$. Since $U$ is open, in this case we can in fact find $f\in U$ such that $J(f,s)>0$ for all $s\in(0,\theta]$ (if $J(f,\phi)=0$ for some $\phi\leq\theta$, just choose $\eta$ small enough that $(1-\eta)f \in U$) and such that $f$ is twice continuously differentiable on $[0,1]$ (twice continuously differentiable functions are dense in $C[0,1]$). Thus necessarily \mbox{$\sup_{g\in U}K(g,\theta)>0$}, and for any $\delta>0$ we may further assume (by a simple argument, for example by approximating with piecewise linear functions and then smoothing) that \mbox{$J(f,\theta)>\sup_{g\in U}K(g,\theta)-\delta$}. Again since $U$ is open, we may take $\varepsilon$ such that $B(f,\varepsilon)\subseteq U$; then clearly for any $T$
\[N_T(f,\varepsilon,\theta)\subseteq N_T(U,\theta)\]
so by Proposition \ref{lower_prop} we have
\[\liminf_{T\to\infty}\frac{1}{T}\log N_T(U,\theta) \geq \sup_{g\in U}K(g,\theta)-\delta\]
almost surely, and by taking a union over $\delta>0$ we may deduce the result.
\end{proof}

\section{The upper bound}\label{upper_bound_section}
Our plan is as follows: we first carry out the simple task of obtaining a bound along lattice times (Proposition \ref{upper_prop}). We then move to continuous time in Lemma \ref{upper_lattice_to_cts}, at the cost of restricting to open balls about fixed paths, by a tube-expansion argument similar to the tube-reduction argument used in Proposition \ref{tube_reduction_prop} of the lower bound. In Lemma \ref{extreme_lem} we then rule out the possibility of any particles following unusual paths, which allows us to restrict our attention to a compact set, and hence a finite number of small open balls about sensible paths. Finally we draw this work together in Proposition \ref{upper_bd_J} to give the bound in continuous time for any closed set $D$.

Our first task, then, is to establish an upper bound along integer times. As with the lower bound, these times are sufficient for our needs, although the following proof would work for any lattice times. In a slight abuse of notation, for $D\subseteq C[0,1]$ and $\theta\in[0,1]$ we define
\[J(D,\theta) := \sup_{f\in D} J(f,\theta).\]

\begin{prop}\label{upper_prop}
For any closed set $D\subseteq C[0,1]$ and $\theta\in[0,1]$ we have
\[\limsup_{\substack{j\to\infty\\j\in\mathbb{N}}}\frac{1}{j}\log|N_j(D,\theta)| \leq J(D,\theta)\]
almost surely.
\end{prop}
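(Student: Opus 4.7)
The strategy is a first-moment computation combined with Borel--Cantelli along integers. Fix $\delta>0$ and set $L := J(D,\theta) + \delta$. By Markov's inequality,
\[
\Pb\bigl(|N_j(D,\theta)| \geq e^{Lj}\bigr) \;\leq\; e^{-Lj}\,\Eb\bigl[|N_j(D,\theta)|\bigr],
\]
so it is enough to show $\Eb[|N_j(D,\theta)|] \leq e^{(L-\delta/2)j}$ for all large $j$. Summability and Borel--Cantelli, together with a union over $\delta = 1/k$, $k\in\mathbb{N}$, will then deliver the almost-sure $\limsup$ bound.

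To estimate the first moment I would apply the many-to-one lemma. Setting $\hat X^T_u(s) := T^{-1}X_u(sT)$ and $\hat\xi^T(s) := T^{-1}\xi_{sT}$ for $s\in[0,\theta]$, and writing $\pi_\theta:C[0,1]\to C[0,\theta]$ for the restriction map, the condition $u\in N_T(D,\theta)$ is equivalent to $\hat X^T_u \in \pi_\theta(D)$, so
\[
\Eb[|N_T(D,\theta)|] \;=\; e^{rm\theta T}\,\Pt\bigl(\hat\xi^T \in \pi_\theta(D)\bigr).
\]
Under $\Pt$ the process $\hat\xi^T$ is a rescaled standard Brownian motion on $[0,\theta]$, so Schilder's theorem gives, for any closed $F\subseteq C[0,\theta]$,
\[
\limsup_{T\to\infty}\tfrac{1}{T}\log\Pt\bigl(\hat\xi^T\in F\bigr)\;\leq\;-\inf_{g\in F}\tfrac{1}{2}\int_0^\theta g'(s)^2\,ds.
\]
Applying this with $F = \overline{\pi_\theta(D)}$ and combining the three displays yields $\Eb[|N_j(D,\theta)|]\leq e^{(J(D,\theta)+\delta/2)j}$ for large $j$, provided the exponential rate extracted from Schilder agrees with $rm\theta - J(D,\theta)$.

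The main obstacle is exactly this agreement. Since $D$ is closed in $C[0,1]$ but $\pi_\theta$ is not a closed map in general, $\pi_\theta(D)$ can fail to be closed in $C[0,\theta]$: two functions in $D$ that agree on $[0,\theta]$ can differ arbitrarily on $[\theta,1]$. Because the rate function $I(g):=\tfrac{1}{2}\int_0^\theta g'(s)^2\,ds$ is only lower semicontinuous, passing to $\overline{\pi_\theta(D)}$ could \emph{a priori} strictly lower the infimum. I would exploit that Schilder's rate function is good: any minimising sequence in $\pi_\theta(D)$ with bounded $I$-values lies in a compact sublevel set of $I$, so extracts a subsequence converging to some $g^\star \in \overline{\pi_\theta(D)}\cap H_1(0,\theta)$, and lower semicontinuity gives $I(g^\star)\leq\liminf_n I(g_n)$. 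Writing $g_n = f_n|_{[0,\theta]}$ with $f_n\in D$ and extending $g^\star$ smoothly across $[\theta,1]$ to an element of $H_1$, one would then modify the tails of the $f_n$'s to produce a sequence in $D$ converging in $C[0,1]$ to this extension; closedness of $D$ places the limit in $D\cap H_1$, yielding
\[
\inf_{g\in \overline{\pi_\theta(D)}}I(g) \;=\; \inf_{f\in D\cap H_1}\tfrac{1}{2}\int_0^\theta f'(s)^2\,ds \;=\; rm\theta - J(D,\theta).
\]
With this identification in place, the Markov, many-to-one, and Schilder estimates combine to produce the summable bound on $\Pb\bigl(|N_j(D,\theta)|\geq e^{Lj}\bigr)$, and Borel--Cantelli completes the proof.
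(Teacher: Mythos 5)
Your main argument---first moment via the many-to-one lemma, Schilder's upper bound, Markov's inequality, Borel--Cantelli along integers, then a union over $\delta$---is exactly the route the paper takes, and that part is sound.

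Your added discussion of whether $\pi_\theta(D)$ is closed identifies a genuine subtlety that the paper itself glosses over (it simply writes Schilder's bound with $\inf_{f\in D}$ on the right without comment), but the repair you propose does not work. You cannot ``modify the tails of the $f_n$'s to produce a sequence in $D$ converging in $C[0,1]$'': $D$ is an arbitrary closed set, and perturbing its elements need not keep you inside it. A concrete obstruction: on $[0,\theta]$ take $g_n(s)=a_n\sin(ns)$ with $a_n\to 0$ chosen so that $\frac{1}{2}\int_0^\theta g_n'(s)^2\,ds = 1+1/n$, and extend each $g_n$ to $f_n\in C_0[0,1]$ by a linear piece of slope $n$ on $[\theta,1]$. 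Then $D=\{f_n:n\geq 1\}$ is closed (since $\|f_n\|_\infty\to\infty$ no subsequence converges), $\pi_\theta(D)=\{g_n\}$ is not closed, and $0\in\overline{\pi_\theta(D)}$ with $I(0)=0<1=\inf_{\pi_\theta(D)}I$, yet no element of $D$ restricts to $0$, and no alteration of the tails makes any subsequence of the $f_n$ converge in $C[0,1]$. So the identity $\inf_{\overline{\pi_\theta(D)}}I = \inf_{\pi_\theta(D)}I$ that you want can genuinely fail. (In this example $\pi_\theta(D)$ is countable, so its Wiener measure is zero and the proposition holds for a different reason---one that your argument does not produce.) There is also a direction issue: lower semicontinuity gives $I(g^\star)\leq\liminf_n I(g_n)$, which only re-derives the trivial inequality $\inf_{\overline{\pi_\theta(D)}}I\leq\inf_{\pi_\theta(D)}I$; what you need is the reverse, and that is precisely what fails above.

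It is worth noting that the paper applies this proposition only via Lemma \ref{upper_lattice_to_cts}, always with $D$ a closed ball about a fixed $f$, and the restriction of a closed ball to $[0,\theta]$ is again a closed ball; the closure subtlety therefore never bites where it is actually used. Your instinct to worry about general closed $D$ is correct, but neither your argument nor the paper's resolves it in that generality.
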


\begin{proof}
From the upper bound for Schilder's theorem (Theorem 5.1 of \cite{varadhan:large_devs_apps}) we have
\[\limsup_{T\to\infty}\frac{1}{T}\log\Pt(\xi_T \in N_T(D,\theta)) \leq - \inf_{f\in D} \frac{1}{2}\int_0^\theta f'(s)^2 ds.\]
Thus, by the many-to-one lemma,
\begin{align*}
\limsup_{T\to\infty}\frac{1}{T}\log\Eb\big[|N_T(D,\theta)|\big] &\leq \limsup_{T\to\infty} \frac{1}{T}\log\left(e^{rm\theta T}\Pt(\xi_T\in N_T(D,\theta))\right)\\
&\leq rm\theta - \inf_{f\in D}\frac{1}{2}\int_0^\theta f'(s)^2 ds\\
&= J(D,\theta).
\end{align*}
Applying Markov's inequality, for any $\delta>0$ we get
\[\limsup_{T\to\infty}\frac{1}{T}\log \Pb\big(|N_T(D,\theta)| \geq e^{J(D,\theta)T + \delta T}\big)\leq \limsup_{T\to\infty}\frac{1}{T}\log \frac{\Eb\big[|N_T(D,\theta)|\big]}{e^{J(D,\theta)T + \delta T}} \leq -\delta\]
so that
\[\sum_{j=1}^\infty \Pb\big(|N_j(D,\theta)| \geq e^{J(D,\theta)j + \delta j}\big) < \infty\]
and hence by the Borel-Cantelli lemma
\[\Pb\left(\limsup_{j\to\infty}\frac{1}{j}\log|N_j(D,\theta)| \geq J(D,\theta)+\delta\right)=0.\]
Taking a union over $\delta>0$ now gives the result.
\begin{comment}
But
\begin{align*}
\Pb&\left(\limsup_{j\to\infty}\frac{1}{T_j}\log|N_{T_j}(D,\theta)| > J(D,\theta)\right)\\
&\leq \Pb\left(\bigcup_{\delta>0}\left\{\limsup_{j\to\infty}\frac{1}{T_j}\log|N_{T_j}(D,\theta)| \geq J(D,\theta)+\delta\right\}\right)\\
&= \lim_{\delta\downarrow 0}\Pb\left(\limsup_{j\to\infty}\frac{1}{T_j}\log|N_{T_j}(D,\theta)| \geq J(D,\theta)+\delta\right) = 0
\end{align*}
as required.
\end{comment}
\end{proof}

We note that the proof by Git \cite{git:almost_sure_path_properties} works up to this point; the rest of the proof of the upper bound will be concerned with plugging the gap in \cite{git:almost_sure_path_properties}.

For $D\subset C[0,1]$ and $\varepsilon>0$, let
\[D^\varepsilon := \{f\in C[0,1] : \inf_{g\in D}\|f-g\|\leq \varepsilon\}.\]
Recall that we defined $N_T(f,\varepsilon,\theta) := N_T(B(f,\varepsilon),\theta)$.

\begin{lem}\label{upper_lattice_to_cts}
If $D\subseteq C[0,1]$ and $f\in D$, then
\[\limsup_{T\to\infty}\frac{1}{T}\log|N_T(f,\varepsilon,\theta)|\leq J(D^{2\varepsilon},\theta)\]
almost surely.
\end{lem}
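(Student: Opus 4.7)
The plan is to mirror the tube-reduction argument of Proposition~\ref{tube_reduction_prop}, but applied in the opposite direction: for $T\in[j,j+1]$ I want to inject $N_T(f,\varepsilon,\theta)$ into $N_{j+1}(D^{2\varepsilon},\theta)$, so that the lattice-time upper bound of Proposition~\ref{upper_prop} applied to $D^{2\varepsilon}$ transfers across. Note that $D^{2\varepsilon}$ is automatically closed, being the sub-level set of the continuous map $g\mapsto d(g,D)$, so Proposition~\ref{upper_prop} applies. Since $A\geq 2$, each $u\in N_T(f,\varepsilon,\theta)$ has at least one descendant $v\in N(\theta(j+1))$; picking one such $v$ by any fixed rule gives distinct $v$'s for distinct $u$'s, and the map $u\mapsto v$ is injective. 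Thus $|N_T(f,\varepsilon,\theta)|\leq|N_{j+1}(D^{2\varepsilon},\theta)|$ as soon as every chosen $v$ can be witnessed in $N_{j+1}(D^{2\varepsilon},\theta)$.

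To arrange this, I would introduce the good event
\[G_j := \Bigl\{\sup_{s\in[\theta j,\theta(j+1)]}|X_v(s)-X_v(\theta j)|\leq \varepsilon j/8 \hs \forall v\in N(\theta(j+1))\Bigr\},\]
and check via the many-to-one lemma and a Gaussian maximal estimate that
\[\Pb(G_j^c)\leq e^{rm\theta(j+1)}\,\Pt\!\Bigl(\sup_{s\leq \theta}|\xi_s|>\varepsilon j/8\Bigr)\lesssim \exp\!\bigl(rm\theta(j+1)-\varepsilon^2 j^2/c\theta\bigr),\]
which is summable; Borel--Cantelli then gives that $G_j$ holds eventually almost surely. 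On $G_j$, for $s\leq\theta T$ we have $X_v(s)=X_u(s)$ and the triangle inequality gives
\[|X_v(s)-(j+1)f(s/(j+1))|\leq \varepsilon T + \|f\|_\theta + (j+1)\omega_f(\theta/j),\]
where $\omega_f$ is the (uniform) modulus of continuity of $f$ on $[0,\theta]$; for $s\in[\theta T,\theta(j+1)]$ we additionally absorb $|X_v(s)-X_u(\theta T)|\leq\varepsilon j/4$ from $G_j$. Both terms are $\leq 2\varepsilon(j+1)$ for all large $j$. Defining $g(r):=X_v(r(j+1))/(j+1)$ on $[0,\theta]$ and extending by $g(r):=f(r)+(g(\theta)-f(\theta))$ on $(\theta,1]$ keeps $g$ continuous and yields $\|g-f\|\leq 2\varepsilon$, so $g\in D^{2\varepsilon}$ witnesses $v\in N_{j+1}(D^{2\varepsilon},\theta)$.

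Finally, since $T\in[j,j+1]$ gives $1/T=(1+O(1/j))/(j+1)$, the $\limsup$ in $T$ across this slab is bounded by $\limsup_{j\to\infty}\frac{1}{j+1}\log|N_{j+1}(D^{2\varepsilon},\theta)|$, which is $\leq J(D^{2\varepsilon},\theta)$ by Proposition~\ref{upper_prop}. \textbf{The main obstacle} is the bookkeeping in the triangle-inequality step: one must verify that the mismatch between the rescalings by $T$ and by $j+1$ (absorbed via uniform continuity of $f$) together with the displacement between times $\theta T$ and $\theta(j+1)$ (absorbed via $G_j$) both fit inside a tube of radius $2\varepsilon(j+1)$, uniformly in $T\in[j,j+1]$. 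Everything else is a short tail estimate plus the observation that stepping from $\{f\}$ up to $D^{2\varepsilon}$ costs precisely the advertised enlargement.
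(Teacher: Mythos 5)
Your proof is correct but follows a genuinely different route from the paper's. The paper compares $|N_t(f,\varepsilon,\theta)|$ for $t\in[j,j+1]$ to $|N_j(f,2\varepsilon,\theta)|$ by tracing particles \emph{backward} to time $\theta j$ --- each such particle has its time-$\theta j$ ancestor in the $2\varepsilon$-tube about $f$ --- and then bounds the ratio $\sup_{t\in[j,j+1]}|N_t(f,\varepsilon,\theta)|/|N_j(f,2\varepsilon,\theta)|$ in expectation by $e^{rm\theta}$ via the branching Markov property, finishing with Markov's inequality and Borel--Cantelli. You instead trace particles \emph{forward} to time $\theta(j+1)$, exhibiting, on a high-probability event $G_j$ controlling every particle's oscillation over $[\theta j,\theta(j+1)]$, a direct pathwise injection of $N_T(f,\varepsilon,\theta)$ into $N_{j+1}(D^{2\varepsilon},\theta)$ uniformly over $T\in[j,j+1]$. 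This is closer in spirit to the tube-reduction step of Proposition~\ref{tube_reduction_prop} (a deterministic inclusion valid outside a summable sequence of bad events) and entirely avoids the expectation/ratio bookkeeping of the paper's argument. The trade-off is that the forward version leans harder on the assumption $A\geq 2$ --- every $u\in N_T(f,\varepsilon,\theta)$ must still have a descendant alive at time $\theta(j+1)$ --- whereas the backward version only needs ancestors, which exist unconditionally; both arguments require repair if offspringless deaths were allowed, as the paper itself remarks. Finally, your observation that $D^{2\varepsilon}$ is closed (sub-level set of the $1$-Lipschitz map $g\mapsto\inf_{h\in D}\|g-h\|$) is correct and lets you cite Proposition~\ref{upper_prop} directly; the paper sidesteps this point by applying that proposition to the closed ball of radius $2\varepsilon$ about $f$, which is contained in $D^{2\varepsilon}$. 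Either observation suffices.
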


\begin{proof}
First note that
\begin{multline*}
\Pb\left(\limsup_{T\to\infty}\frac{1}{T}\log|N_T(f,\varepsilon,\theta)|>J(D^{2\varepsilon},\theta)+\delta\right)\\
\leq \Pb\left(\limsup_{j\to\infty}\frac{1}{j}\log|N_{j}(f,2\varepsilon,\theta)| > J(D^{2\varepsilon},\theta)\right)\\
+ \Pb\left(\limsup_{j\to\infty}\frac{1}{j}\log\sup_{t\in[j,j+1]}\frac{|N_t(f,\varepsilon,\theta)|}{|N_{j}(f,2\varepsilon,\theta)|} > \delta\right).
\end{multline*}
Since $f\in D$, the uniform closed ball of radius $2\varepsilon$ about $f$ is a subset of $D^{2\varepsilon}$, so by Proposition \ref{upper_prop},
\[\Pb\left(\limsup_{j\to\infty}\frac{1}{j}\log|N_{j}(f,2\varepsilon,\theta)| > J(D^{2\varepsilon},\theta)\right) = 0\]
and we may concentrate on the last term. We claim that for $j$ large enough, for any $t\in[j,j+1]$ we have
\[N_t(f,\varepsilon,\theta j/t) \subseteq N_j(f,2\varepsilon,\theta).\]
Indeed, if $u\in N_t(f,\varepsilon,\theta j/t)$ then for any $s\leq \theta j$,
\begin{align*}
&|X_u(s) - jf(s/j)|\\
&\leq \left|X_u(s) - t f\left(s/t\right)\right| + \left|jf\left(s/j\right) - t f\left(s/j\right)\right| + t\left|f\left(s/j\right) - f\left(s/t\right)\right|\\
&\leq t\varepsilon + \|f\|_\theta + t\sup_{\begin{subarray}{c}x,y\in[0,\theta]\\|x-y|\leq 1/j \end{subarray}}|f(x)-f(y)|
\end{align*}
which is smaller than $2\varepsilon j$ for large $j$ since $f$ is absolutely continuous.

We deduce that for large $j$ every particle in $N_t(f,\varepsilon,\theta)$ for any $t\in[j,j+1]$ has an ancestor in $N_j(f,2\varepsilon,\theta)$; thus, letting $N(u,s,t)$ be the set of all descendants (including, possibly, $u$ itself) of particle $u\in N(s)$ at time $t$,
\begin{align*}
&\Eb\left[\sup_{t\in[j,j+1]}\frac{|N_t(f,\varepsilon,\theta)|}{|N_j(f,2\varepsilon,\theta)|}\right]\\
&\leq \Eb\left[\frac{\Eb\left[\left.\sup_{t\in[j,j+1]}|N_t(f,\varepsilon,\theta)|\right|\Fg_{\theta j}\right]}{|N_j(f,2\varepsilon,\theta)|}\right]\\
&\leq \Eb\left[\frac{\Eb\left[\left.\sup_{t\in[j,j+1]}\sum_{u\in N_j(f,2\varepsilon,\theta)}|N(u,\theta j,\theta t)|\right|\Fg_{\theta j}\right]}{|N_j(f,2\varepsilon,\theta)|}\right].
\end{align*}
Since $|N(u,\theta j,\theta t)|$ is non-decreasing in $t$, using the Markov property we get
\begin{align*}
\Eb\left[\sup_{t\in[j,j+1]}\frac{|N_t(f,\varepsilon,\theta)|}{|N_j(f,2\varepsilon,\theta)|}\right] &\leq \Eb\left[\frac{\sum_{u\in N_j(f,2\varepsilon,\theta)}\Eb\big[|N(u,\theta j,\theta (j+1))|\big|\Fg_{\theta j}\big]}{|N_j(f,2\varepsilon,\theta)|}\right]\\
& = \Eb\left[\frac{|N_j(f,2\varepsilon,\theta)|\Eb[|N(\theta)|]}{|N_j(f,2\varepsilon,\theta)|}\right]\\
& = \exp(rm\theta).
\end{align*}
Hence by Markov's inequality
\[\Pb\left(\sup_{t\in[j,j+1]}\frac{|N_t(f,\varepsilon,\theta)|}{|N_j(f,2\varepsilon,\theta)|} > \exp\left(\delta j\right) \right) \leq \exp\left(rm\theta -\delta j\right)\]
and applying Borel-Cantelli
\[\Pb\left(\limsup_{j\to\infty}\frac{1}{j}\log\sup_{t\in[j,j+1]}\frac{|N_t(f,\varepsilon,\theta)|}{|N_{j}(f,2\varepsilon,\theta)|} > \delta\right)=0.\]
Again taking a union over $\delta>0$ gives the result.
\end{proof}

If we were considering the possibility of particles dying with no offspring then $N(u,\theta j,\theta t)$ would not be non-decreasing in $t$, but considering instead the set of all descendants of $u$ ever alive between times $\theta j$ and $\theta t$ would give us a slightly worse --- but still good enough --- estimate.

We move now onto ruling out extreme paths, by choosing a ``bad set'' $F_N$ and showing that no particles follow paths in this set. There is a balance to be found between including enough paths in $F_N$ that $C_0[0,1]\setminus F_N$ is compact, but not so many that we might find some (rescaled) Brownian paths within $F_N$ at large times.

For simplicity of notation, we extend the definition of $N_T(D,\theta)$ to sets \mbox{$D\subseteq C[0,\theta]$} in the obvious way, setting
\[N_T(D,\theta) := \{u \in N(\theta T) : \exists f \in D \hbox{ with } X_u(t) = Tf(t/T) \hs \forall t\in[0,\theta T]\}.\]

\begin{lem}\label{extreme_lem}
Fix $\theta\in[0,1]$. For $N\in\mathbb{N}$, let
\small
\[F_N:=\left\{f\in C[0,\theta] : \exists n\geq N, \hsl u,s\in[0,\theta] \hbox{ with } |u-s|\leq \frac{1}{n^2}, \hsl |f(u)-f(s)|>\frac{1}{\sqrt n}\right\}.\]
\normalsize
Then for large $N$
\[\limsup_{T\to\infty}\frac{1}{T}\log |N_T(F_N,\theta)| = -\infty\]
almost surely.
\end{lem}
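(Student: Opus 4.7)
The plan is to reduce the continuous-time statement to one at integer times by dominating every bad event that can occur on $[j,j+1]$ by a single event depending only on the tree up to time $\theta(j+1)$, then apply the many-to-one lemma together with a Brownian modulus-of-continuity estimate, and finish with Borel--Cantelli. Since $A\geq 2$ almost surely, the process never dies out, so every particle has at least one descendant alive at any later time.

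First I would observe that if $u \in N_T(F_N,\theta)$ for some $T \in [j,j+1]$ then the definition of $F_N$ yields $n \geq N$ and $a,b \in [0,\theta T]$ with $|a-b| \leq T/n^2 \leq (j+1)/n^2$ and $|X_u(a)-X_u(b)| > T/\sqrt n \geq j/\sqrt n$. Any descendant $w$ of $u$ alive at time $\theta(j+1)$ has $X_w \equiv X_u$ on $[0,\theta T]$ and hence inherits the same oscillation. Thus the integer-indexed set
\[\hat N_j := \left\{w \in N(\theta(j+1)) : \exists n \geq N,\; a,b \in [0,\theta(j+1)],\; |a-b| \leq \tfrac{j+1}{n^2},\; |X_w(a)-X_w(b)| > \tfrac{j}{\sqrt n}\right\}\]
is non-empty whenever $\bigcup_{T\in[j,j+1]} N_T(F_N,\theta)$ is, and it suffices to show $\hat N_j = \emptyset$ for all large $j$ almost surely.

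The many-to-one lemma gives
\[\Eb\bigl[|\hat N_j|\bigr] = e^{rm\theta(j+1)}\,\Pt\!\left(\exists n\geq N,\; a,b\in[0,\theta(j+1)],\; |a-b|\leq\tfrac{j+1}{n^2},\; |\xi_a-\xi_b|>\tfrac{j}{\sqrt n}\right).\]
For each $n$, partitioning $[0,\theta(j+1)]$ into $O(n^2)$ intervals of length $(j+1)/n^2$ forces any such $a,b$ into the same or adjacent interval, so the reflection principle applied to Brownian increments of height $j/(2\sqrt n)$ over time $2(j+1)/n^2$ bounds the $n$-contribution by $Cn^2\exp(-cjn)$ for some fixed $c>0$ and $j$ large. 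Summing over $n\geq N$ is controlled by its first term, yielding $\Eb[|\hat N_j|] \leq C'N^2\exp((rm\theta - cN)j)$. For $N>rm\theta/c$ this is summable in $j$, and Markov's inequality combined with Borel--Cantelli gives $\hat N_j=\emptyset$ eventually almost surely, so $|N_T(F_N,\theta)|=0$ for all large $T$ and the stated limit $-\infty$ follows.

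The main obstacle I anticipate is the bookkeeping in the continuous-to-integer reduction: because the rescaling $s \mapsto X_u(sT)/T$ depends on $T$, one must check that the slightly loosened constants $(j+1)/n^2$ and $j/\sqrt n$ in $\hat N_j$ genuinely absorb every $T\in[j,j+1]$ while still leaving enough decay to offset the many-to-one factor $e^{rm\theta(j+1)}$. Once that reduction is set up, the Brownian tail estimate is a standard application of the reflection principle and the remaining arguments are mechanical.
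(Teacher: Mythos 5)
Your proof is correct and follows essentially the same route as the paper: both pass from the continuous family of bad events on $[j,j+1]$ to a single event depending on particle paths up to the fixed time $\theta(j+1)$ (exploiting that descendants inherit the oscillation of their ancestor's path and that the process does not die out), then apply the many-to-one lemma, discretize the modulus-of-continuity event on a grid of mesh $\asymp (j+1)/n^2$, bound each piece via the reflection principle, sum over $n\geq N$, and finish with Markov plus Borel--Cantelli. The paper phrases the reduction by bounding $\Pt(\exists t\in[j,j+1]:\xi_t\in N_t(F_N,\theta))$ directly and relaxing the $t$-dependent constraints with $(S,T)=(j,j+1)$, whereas you set up the larger integer-indexed set $\hat N_j$ first and then apply many-to-one to it; these are the same idea with slightly different bookkeeping (your grid is half as fine and your threshold twice as large, but both give a bound of the form $Cn^2 e^{-cjn}$ which suffices).
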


\begin{proof}
Fix $T\geq S\geq0$; then for any $t\in[S,T]$,
\begin{align*}
\{\xi_t\in N_t(F_N,\theta)\} &= \left\{\exists n\geq N,\hsl u,s\in[0,\theta] : |u-s|\leq \frac{1}{n^2},\hsl \left|\frac{\xi_{ut} - \xi_{st}}{t}\right| > \frac{1}{\sqrt n}\right\}\\
&\subseteq \left\{\exists n\geq N,\hsl u,s\in[0,\theta] : |u-s|\leq \frac{1}{n^2},\hsl \left|\frac{\xi_{uT} - \xi_{sT}}{S}\right| > \frac{1}{\sqrt n}\right\}.
\end{align*}
Since the right-hand side does not depend on $t$, we deduce that
\begin{multline*}
\{\exists t\in[S,T] : \xi_t\in N_t(F_N,\theta)\} \\ \subseteq \left\{\exists n\geq N,\hsl u,s\in[0,\theta] : |u-s|\leq \frac{1}{n^2},\hsl \left|\frac{\xi_{uT} - \xi_{sT}}{S}\right| > \frac{1}{\sqrt n}\right\}.\end{multline*}
Now, for $s\in[0,\theta]$, define $\pi(n,s):=\lfloor 2n^2 s \rfloor /2n^2$. Suppose we have a continuous function $f$ such that \mbox{$\sup_{s\in[0,\theta]}|f(s) - f(\pi(n,s))|\leq 1/4\sqrt n$}. If $u,s\in[0,\theta]$ satisfy $|u-s|\leq 1/n^2$, then
\begin{align*}
&|f(u) - f(s)|\\
&\leq |f(u) - f(\pi(n,u))| + |f(s) - f(\pi(n,s))| + |f(\pi(n,s)) - f(\pi(n,u))|\\
&\leq \frac{1}{4\sqrt n} + \frac{1}{4\sqrt n} + \frac{2}{4\sqrt n} = \frac{1}{\sqrt n}.
\end{align*}
Thus
\[\{\exists t\in[S,T] : \xi_t \in N_t(F_N,\theta)\}\subseteq\left\{\exists n\geq N,\hsl s\leq\theta : \left|\frac{\xi_{sT}-\xi_{\pi(n,s)T}}{S}\right|>\frac{1}{4\sqrt n}\right\}.\]
Standard properties of Brownian motion now give us that
\begin{align*}
\Pt(\exists t\in[S,T] : \xi_t\in N_t(F_N,\theta)) &\leq \Pt\left(\exists n\geq N,\hsl s\leq\theta : |\xi_{sT}-\xi_{\pi(n,s)T}|>S/4\sqrt n\right)\\
&\leq \sum_{n\geq N} 2n^2 \Pt\left(\sup_{s\in[0,1/2n^2]}\left|\xi_{sT}\right|> S/4\sqrt n \right)\\
%&\leq \sum_{n\geq N} 4n^2 \Pt\left(\sup_{s\in[0,1/2n^2]}\xi_{sT}> S/4\sqrt n \right)\\
%&= \sum_{n\geq N} 4n^2 \Pt\left(\left|\xi_{T/2n^2}\right|> S/4\sqrt n \right)\\
%&= \sum_{n\geq N} 4n^2 \Pt\left(|\xi_1| > \frac{S\sqrt n}{2\sqrt{2T}} \right)\\
&\leq \sum_{n\geq N} \frac{8\sqrt{n^3 T}}{S\sqrt{\pi}}\exp\left(-\frac{S^2 n}{16T}\right).
\end{align*}
Taking $S=j$ and $T=j+1$, we note that for large $N$,
\[\sum_{n\geq N} \frac{8\sqrt{n^3 T}}{S\sqrt{\pi}}\exp\left(-\frac{S^2 n}{16T}\right) \leq \sum_{n\geq N} \exp\left(-\frac{j n}{32}\right) \leq \exp\left(-\frac{j N}{64}\right)\]
so that (again for large $N$),
\[\Pt(\exists t\in[j,j+1] : \xi_t\in N_t(F_N,\theta)) \leq \exp(-2rmj).\]
Applying Markov's inequality and the many-to-one lemma,
\begin{align*}
\Pb(\sup_{t\in[j,j+1]} |N_t(F_N,\theta)|\geq 1) &\leq \Eb\left[\sup_{t\in[j,j+1]} |N_t(F_N,\theta)|\right]\\
&\leq \Eb\left[\sum_{u\in N(j+1)} \ind_{\{\exists t\in[j,j+1], \hsl v\leq u \hsl:\hsl v\in N_t(F_N,\theta)\}} \right]\\
&\leq e^{rm\theta (j+1)}\Pt(\exists t\in[j,j+1] : \xi_t \in N_t(F_N,\theta))\\
&\leq \exp(rm\theta (j+1) - 2rmj).
\end{align*}
Thus, by Borel-Cantelli, we have that for large enough $N$
\[\Pb(\limsup_{j\to\infty} \sup_{t\in[j,j+1]} |N_t(F_N,\theta)| \geq 1)=0\]
and since $|N_T(F_N,\theta)|$ is integer-valued,
\[\limsup_{T\to\infty}\frac{1}{T}\log|N_T(F_N,\theta)| = -\infty\]
almost surely.
\end{proof}

Now that we have ruled out any extreme paths, we check that we can cover the remainder of our sets in a suitable way.

\begin{lem}\label{totally_bdd_lem}
For $\theta \in[0,1]$, let
\[C_0[0,\theta]:=\{f\in C[0,\theta] : f(0)=0\}.\]
For each $N\in\mathbb{N}$, the set $C_0[0,\theta]\setminus F_N$ is totally bounded under $\|\cdot\|_\theta$ (that is, it may be covered by open balls of arbitrarily small radius).
\end{lem}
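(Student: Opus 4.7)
The plan is to invoke the Arzel\`a-Ascoli theorem for $(C[0,\theta],\|\cdot\|_\theta)$, which will show that $C_0[0,\theta]\setminus F_N$ is relatively compact and hence totally bounded (in a metric space, relative compactness trivially implies total boundedness, and we only need the latter direction). The defining property of the complement is that every $f\in C_0[0,\theta]\setminus F_N$ satisfies $f(0)=0$ together with the modulus-of-continuity estimate
\[|f(u)-f(s)|\leq \frac{1}{\sqrt n} \hs \hbox{whenever } n\geq N \hbox{ and } |u-s|\leq \frac{1}{n^2}.\]
So the proof reduces to checking equicontinuity and uniform boundedness; both are essentially built into the definition of $F_N$.

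For equicontinuity, given $\varepsilon>0$, I would pick an integer $n\geq N$ large enough that $1/\sqrt n<\varepsilon$ and set $\delta:=1/n^2$. Then for every $f\in C_0[0,\theta]\setminus F_N$ and every pair $u,s\in[0,\theta]$ with $|u-s|\leq\delta$, the defining property gives $|f(u)-f(s)|\leq 1/\sqrt n<\varepsilon$, and $\delta$ does not depend on $f$.

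For uniform boundedness, fix $f\in C_0[0,\theta]\setminus F_N$ and $s\in[0,\theta]$. Partition $[0,s]$ into $k\leq\lceil\theta N^2\rceil$ consecutive subintervals each of length at most $1/N^2$, apply the modulus estimate with $n=N$ on each subinterval, and telescope using $f(0)=0$ to obtain
\[|f(s)| \leq \frac{k}{\sqrt N} \leq \frac{\lceil\theta N^2\rceil}{\sqrt N},\]
a bound that is independent of $f$ and $s$.

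With these two ingredients, Arzel\`a-Ascoli immediately yields that $C_0[0,\theta]\setminus F_N$ has compact closure in $(C[0,\theta],\|\cdot\|_\theta)$, which is precisely the total boundedness asserted. There is no genuine obstacle here: the entire content of the lemma is encoded in the definition of $F_N$, and the two verifications above are routine once one observes that the modulus condition is uniform across the complement.
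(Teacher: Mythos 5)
Your proof is correct and follows essentially the same route as the paper: read the complement of $F_N$ as a uniform modulus-of-continuity condition, check equicontinuity and uniform boundedness, and apply Arzel\`a--Ascoli (noting that relative compactness in the complete metric space $(C[0,\theta],\|\cdot\|_\theta)$ gives total boundedness). The only difference is that you spell out the telescoping argument for uniform boundedness, which the paper leaves as a one-line remark.
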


\begin{proof}
Given $\varepsilon>0$ and $N\in\mathbb{N}$, choose $n$ such that \mbox{$n\geq N\vee (1/\varepsilon^2)$}. For any \mbox{$f\in C_0[0,\theta]\setminus F_N$}, if \mbox{$|u-s|<1/n^2$} then \mbox{$|f(u)-f(s)|\leq1/\sqrt n \leq \varepsilon$}. Thus \mbox{$C_0[0,\theta]\setminus F_N$} is equicontinuous (and, since each function must start from 0, uniformly bounded) and we may apply the Arzel\`a-Ascoli theorem to say that $C_0[0,\theta]\setminus F_N$ is relatively compact, which is equivalent to totally bounded since $(C[0,\theta], \|\cdot\|_\theta)$ is a complete metric space.
\end{proof}

We are now in a position to give an upper bound for any closed set $D$ in continuous time. This upper bound is not quite what we asked for in Theorem \ref{main_thm}, but this issue --- replacing $J$ with $K$ ---will be corrected in Corollary \ref{upper_bd}.

\begin{prop}\label{upper_bd_J}
If $D\subset C[0,1]$ is closed, then for any $\theta\in[0,1]$
\[\limsup_{T\to\infty}\frac{1}{T}\log|N_T(D,\theta)|\leq J(D,\theta)\]
almost surely.
\end{prop}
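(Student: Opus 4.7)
The plan is to assemble Proposition \ref{upper_prop} and Lemmas \ref{upper_lattice_to_cts}, \ref{extreme_lem}, and \ref{totally_bdd_lem}. Since every particle starts at the origin, the quantity $|N_T(D,\theta)|$ is unchanged if we replace $D$ by $D\cap C_0[0,1]$ (still closed) and identify its elements with their restrictions to $[0,\theta]$; so effectively we work inside $C_0[0,\theta]$, which is what the preparatory lemmas are adapted to.

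Fix $\varepsilon>0$ and pick $N$ large enough that Lemma \ref{extreme_lem} gives $\limsup_{T\to\infty} T^{-1}\log|N_T(F_N,\theta)|=-\infty$ almost surely. Then
\[|N_T(D,\theta)| \leq |N_T(F_N,\theta)| + |N_T(D\setminus F_N,\theta)|,\]
and by Lemma \ref{totally_bdd_lem} the subset $D\setminus F_N\subseteq C_0[0,\theta]\setminus F_N$ is totally bounded, so it can be covered by finitely many balls $B(f_1,\varepsilon),\ldots,B(f_k,\varepsilon)$ with centres $f_i\in D$. Then $N_T(D\setminus F_N,\theta)\subseteq\bigcup_{i\leq k} N_T(f_i,\varepsilon,\theta)$, and applying Lemma \ref{upper_lattice_to_cts} to each ball gives
\[\limsup_{T\to\infty}\frac{1}{T}\log|N_T(D,\theta)| \leq \max_{i\leq k}\limsup_{T\to\infty}\frac{1}{T}\log|N_T(f_i,\varepsilon,\theta)| \leq J(D^{2\varepsilon},\theta)\]
almost surely, since the logarithm of a finite sum is asymptotically the log of the maximum.

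Finally let $\varepsilon\downarrow 0$. One must show that $\limsup_{\varepsilon\downarrow 0}J(D^{2\varepsilon},\theta)\leq J(D,\theta)$. If the left-hand side is $-\infty$ we are done; otherwise choose $\varepsilon_n\downarrow 0$ and $g_n\in D^{2\varepsilon_n}$ with $J(g_n,\theta)$ converging to this limsup, so the Dirichlet energies $\tfrac{1}{2}\int_0^\theta g_n'(s)^2\,ds$ remain bounded. Sublevel sets of the Schilder rate function are compact in $C_0[0,\theta]$, so a subsequence $g_{n_k}$ converges to some $f$; since each $g_n$ lies within $2\varepsilon_n$ of $D$ and $D$ is closed, $f\in D$. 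Lower semicontinuity of the Dirichlet energy then gives $J(f,\theta)\geq \limsup_k J(g_{n_k},\theta)$, yielding $J(D,\theta)\geq \limsup_{\varepsilon\downarrow 0}J(D^{2\varepsilon},\theta)$ as required.

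The only substantive obstacle is this last passage to the limit: it is the one place the specific structure of the rate function (lower semicontinuity with compact sublevel sets) is used, and it is essentially the reason for having performed the compactification via $F_N$ in the first place. Everything else is bookkeeping; the only minor care needed is in the covering step, where the centres $f_i$ must be chosen inside $D$ itself so that Lemma \ref{upper_lattice_to_cts} applies and produces a bound in terms of $D^{2\varepsilon}$ rather than some awkwardly inflated set.
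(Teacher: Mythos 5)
Your proof is correct and follows essentially the same route as the paper's: reduce to $C_0[0,1]$, discard the ``bad'' paths $F_N$ via Lemma~\ref{extreme_lem}, cover what remains with finitely many small balls via Lemma~\ref{totally_bdd_lem}, bound each ball via Lemma~\ref{upper_lattice_to_cts}, and then use the goodness of the Schilder rate function to pass from $J(D^{2\varepsilon},\theta)$ to $J(D,\theta)$. The one genuine difference is in how that last step is organised: the paper fixes $\delta>0$ and chooses $\varepsilon$ \emph{before} covering, by noting that the compact level set $\{J(\cdot,\theta)\geq J(D,\theta)+\delta\}$ lies at positive distance from the (closed) restriction of $D$ to $[0,\theta]$; you instead obtain the a.s.\ bound $J(D^{2\varepsilon},\theta)$ for each $\varepsilon$ along a null sequence and then argue $\lim_{\varepsilon\downarrow 0}J(D^{2\varepsilon},\theta)\leq J(D,\theta)$ via sequential compactness of sublevel sets together with lower semicontinuity. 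These are the same compactness fact deployed in ``metric'' versus ``sequential'' form, and either is fine. You are also a touch more careful than the paper in placing the ball centres $f_i$ inside $D$, which is exactly what Lemma~\ref{upper_lattice_to_cts} needs to give a bound in terms of $D^{2\varepsilon}$. One small point to watch (shared with the paper): your extracted limit $f$ lives in $C_0[0,\theta]$, and you conclude ``$f\in D$''. Strictly this requires that the set of restrictions of $D$ to $[0,\theta]$ be closed in $C_0[0,\theta]$, which is not automatic for a closed $D\subseteq C[0,1]$; it deserves a word of justification, or better, one should treat $D$ as a set of paths on $[0,\theta]$ from the outset, since $N_T(D,\theta)$ and the $[0,\theta]$-Dirichlet energy both depend on $D$ only through those restrictions.
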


\begin{proof}
Clearly (since our first particle starts from 0) $N_T(D\setminus C_0[0,1],\theta)=\emptyset$ for all $T$, so we may assume without loss of generality that $D\subseteq C_0[0,1]$. Now, for each $\theta$,
\[f \mapsto \left\{\begin{array}{ll}\frac{1}{2}\int_0^\theta f'(s)^2 ds & \hbox{ if } f\in H_1\\ \infty & \hbox{ otherwise}\end{array}\right.\]
is a good rate function on $C_0[0,\theta]$ (that is, lower-semicontinuous with compact level sets): we refer to Section 5.2 of \cite{dembo_zeitouni:large_devs} but it is possible to give a proof by showing directly that the function is lower-semicontinuous, then applying Jensen's inequality and the Arzel\`a-Ascoli theorem to prove that its level sets in $C_0[0,1]$ are compact. Hence we know that for any $\delta>0$,
\[\{f\in C_0[0,\theta] : J(f,\theta) \geq J(D,\theta) + \delta\}\]
is compact, and since it is disjoint from
\[\{f\in C_0[0,\theta] : \exists g\in D \hbox{ with } f(s)=g(s) \hsl\forall s\in[0,\theta]\},\]
which is closed, there is a positive distance between the two sets. Thus we may fix $\delta>0$ and choose $\varepsilon>0$ such that $J(D^{2\varepsilon},\theta) < J(D,\theta)+\delta$. Then, by Lemma \ref{totally_bdd_lem}, for any $N$ we may choose a finite $\alpha$ (depending on $N$) and some $f_k$, $k=1,2,\ldots,\alpha$ such that balls of radius $\varepsilon$ about the $f_k$ cover $C_0[0,\theta]\setminus F_N$. Thus
\begin{multline*}
\Pb\left(\limsup_{T\to\infty}\frac{1}{T}\log|N_T(D,\theta)|>J(D,\theta)+\delta\right)\\
\leq \Pb\left(\limsup_{T\to\infty}\frac{1}{T}\log|N_T(F_N,\theta)| > J(D,\theta)+\delta\right)\\
+ \sum_{k=1}^\alpha \Pb\left(\limsup_{T\to\infty}\frac{1}{T}\log|N_T(f_k,\varepsilon,\theta)| > J(D^{2\varepsilon},\theta)\right).
\end{multline*}
By Lemma \ref{extreme_lem} and Lemma \ref{upper_lattice_to_cts}, for large enough $N$ the terms on the right-hand side are all zero. As usual we take a union over $\delta>0$ to complete the proof.
\end{proof}

\begin{cor}\label{upper_bd}
For any closed set $D\subseteq C[0,1]$ and $\theta\in[0,1]$, we have
\[\limsup_{T\to\infty}\frac{1}{T}\log|N_T(D,\theta)| \leq \sup_{f\in D} K(f,\theta)\]
almost surely.
\end{cor}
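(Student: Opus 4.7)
Let $c := \sup_{f \in D} K(f,\theta)$ and fix $\delta > 0$; it will suffice to show $\limsup_T T^{-1}\log|N_T(D,\theta)| \leq c+\delta$ almost surely, and then take $\delta\downarrow 0$ along a rational sequence. Proposition \ref{upper_bd_J} already yields a $J$-bound, so the extra work lies entirely in excluding those $f\in D$ with $\theta>\theta_0(f)$: on such paths $K(f,\theta)=-\infty$ but $J(f,\theta)$ may exceed $c$, so applying Proposition \ref{upper_bd_J} at time $\theta$ is not sharp enough. The idea is that such $f$ must admit some $\phi<\theta$ with $J(f,\phi)<0$, and at that intermediate time Proposition \ref{upper_bd_J} gives a strictly negative exponential rate, forcing no particles at all. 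The main difficulty is packaging these local bounds into a single statement.

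The first step is to split $D=D_1\cup D_2$ with
\[D_1 := D\cap \bigcap_{\phi\in\mathbb{Q}\cap[0,\theta]}\{f:J(f,\phi)\geq 0\},\qquad D_2:=D\setminus D_1.\]
Each $\{J(\cdot,\phi)\geq 0\}$ is closed by upper-semicontinuity of $J(\cdot,\phi)$ (since $\tfrac{1}{2}\int_0^\phi f'(s)^2\,ds$ is a good rate function), so $D_1$ is closed. Using continuity of $\phi\mapsto J(f,\phi)$ on $H_1$ one checks $D_1=\{f\in D\cap H_1:\theta_0(f)\geq\theta\}$, hence $K\equiv J$ on $D_1$ while $K\equiv-\infty$ on $D_2$; in particular $J(f,\theta)\leq c$ for all $f\in D_1$, while for each $f\in D_2$ there exist rational $\phi_f\in(0,\theta]$ and $\varepsilon_f>0$ with $J(f,\phi_f)\leq -2\varepsilon_f$.

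Next I would invoke Lemma \ref{extreme_lem} to choose $N$ so large that $|N_T(F_N,\theta)|=0$ for all large $T$ almost surely. Since $F_N$ is open in $C_0[0,\theta]$ and $C_0[0,\theta]\setminus F_N$ is totally bounded (Lemma \ref{totally_bdd_lem}), the latter is compact, and hence so is its closed subset $D\setminus F_N$ (working with $D$ via its restriction to $[0,\theta]$, as in the extended definition of $N_T$ given just before Lemma \ref{extreme_lem}). For each $f\in D\setminus F_N$ upper-semicontinuity of $J(\cdot,\theta)$ (if $f\in D_1$) or of $J(\cdot,\phi_f)$ (if $f\in D_2$) supplies a radius $r_f>0$ such that on the closed ball $\overline{B(f,r_f)}$ we have, respectively, $J(\cdot,\theta)\leq c+\delta$ (Case A) or $J(\cdot,\phi_f)\leq -\varepsilon_f$ (Case B). Compactness of $D\setminus F_N$ then extracts a finite subcover $\overline{B(f_i,r_{f_i})}$, $i=1,\ldots,n$.

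Applying Proposition \ref{upper_bd_J} to each ball finishes the proof. On Case~A balls, $\limsup_T T^{-1}\log|N_T(\overline{B(f_i,r_{f_i})},\theta)|\leq c+\delta$. On Case~B balls, $\limsup_T T^{-1}\log|N_T(\overline{B(f_i,r_{f_i})},\phi_{f_i})|\leq -\varepsilon_{f_i}<0$, so this integer-valued quantity is zero for all large $T$; and since any particle in $N_T(\overline{B(f_i,r_{f_i})},\theta)$ has its time-$\phi_{f_i}T$ ancestor in the same ball, also $|N_T(\overline{B(f_i,r_{f_i})},\theta)|=0$ for large $T$. Combining with the $F_N$ estimate and taking a union over rational $\delta$ gives the claim. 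The main obstacle is precisely the cover step: $D$ is not compact and $K$ is not upper-semicontinuous, so the reduction to $D\setminus F_N$ via Lemma \ref{extreme_lem}, combined with the substitution of the upper-semicontinuous $J(\cdot,\phi)$ at intermediate times for the non-upper-semicontinuous $K(\cdot,\theta)$, is essential to keep the closed-ball choices stable.
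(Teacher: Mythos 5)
Your proof takes a genuinely different, and considerably more thorough, route than the paper's. The paper's proof of this corollary is only a few lines: it observes that if $J(D,\phi)<0$ at some $\phi\leq\theta$ then $|N_T(D,\phi)|$, and hence $|N_T(D,\theta)|$, is eventually zero because these are integer-valued, so the limsup is $-\infty$; and it declares the proof complete, implicitly assuming that in the remaining case $K(D,\theta)=J(D,\theta)$, so that Proposition \ref{upper_bd_J} already gives the bound. That implicit dichotomy --- either $K(D,\theta)=J(D,\theta)$, or $J(D,\phi)<0$ for a single $\phi\leq\theta$ --- is not justified in the paper, and is not obvious for an arbitrary closed $D$: different elements of $D$ may dip below zero at different times, so each individual near-maximizer of $J(\cdot,\theta)$ over $D$ can have $\theta_0<\theta$ while $J(D,\phi)=\sup_{f\in D}J(f,\phi)$ stays non-negative for every $\phi$. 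You address exactly this issue. Rather than looking for a global bad time, you re-run the compactness and covering machinery of Proposition \ref{upper_bd_J}: after discarding $F_N$ via Lemma \ref{extreme_lem}, the remaining set is totally bounded, so a finite cover by closed balls exists, with each ball tagged via upper semi-continuity of $J(\cdot,\phi)$ (for a fixed $\phi$, which is u.s.c., unlike $K(\cdot,\theta)$) either as a ``Case A'' ball where $J(\cdot,\theta)\leq c+\delta$, or a ``Case B'' ball where $J(\cdot,\phi_f)<0$ at some intermediate $\phi_f$; Proposition \ref{upper_bd_J} is then applied ball-by-ball. The gain is that this is watertight where the paper's final step is terse; the cost is essentially repeating the cover construction already carried out inside Proposition \ref{upper_bd_J}.

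One shared subtlety worth flagging: when you pass to $D\setminus F_N$ in $C[0,\theta]$ and invoke compactness, you are tacitly treating the restriction $\{f|_{[0,\theta]}:f\in D\}$ as closed in $C[0,\theta]$, which is not automatic for a closed $D\subseteq C[0,1]$. However this is exactly what the paper itself does in the proof of Proposition \ref{upper_bd_J} (the claim that $\{f\in C_0[0,\theta]:\exists g\in D\text{ with }f=g|_{[0,\theta]}\}$ is closed), so you inherit rather than introduce the issue. Also, $D_1$ being closed needs $J(\cdot,\phi)$ to be upper semi-continuous on all of $C[0,1]$, not just $C_0[0,\phi]$; this is fine since $J(\cdot,\phi)\equiv-\infty$ off $C_0[0,1]$, but is worth a word.
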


\begin{proof}
Since $|N_T(D,\theta)|$ is integer valued,
\[\frac{1}{T}\log |N_T(D,\theta)| < 0 \hs \Rightarrow \hs \frac{1}{T}\log |N_T(D,\theta)| = -\infty.\]
Thus, by Proposition \ref{upper_prop}, if $J(D,\theta)<0$ then
\[\Pb\left(\limsup_{T\to\infty}\frac{1}{T}\log|N_T(D,\theta)| > -\infty\right)=0.\]
Further, clearly for $\phi\leq\theta$ and any $T\geq0$, if $N_T(D,\phi)=\emptyset$ then necessarily $N_T(D,\theta)=\emptyset$. Thus if there exists $\phi\leq\theta$ with $J(D,\phi)<0$, then
\[\Pb\left(\limsup_{T\to\infty}\frac{1}{T}\log|N_T(D,\theta)| > -\infty\right)=0\]
which completes the proof.
\end{proof}

\noindent
Combining Corollary \ref{lower_bd} with Corollary \ref{upper_bd} completes the proof of Theorem \ref{main_thm}.

\section*{Appendix: The oversight in \cite{git:almost_sure_path_properties}}
In \cite{git:almost_sure_path_properties} it is written that under a certain assumption, setting
\[W_n = \left\{\omega\in\Omega : \limsup_{T\to\infty} \frac{1}{T} \log |N_T(D,\theta)| > J(D,\theta) + \frac{1}{n}\right\}\]
(it is not important what $J(D,\theta)$ is here) we have $\Pb(W_n)>0$ for some $n$. This is correct, but the article then goes on to say ``It is now clear that
\[\limsup_{T\to\infty} \frac{1}{T} \log \mathbb{E}\big[|N_T(D,\theta)|\big] \geq J(D,\theta) + \frac{1}{n}\hs\hbox{''}\]
which does not appear to be obviously true. To see this explicitly, work on the probability space $[0,1]$ with Lebesgue probability measure $\Pb$. Let $X_T$, $T\geq0$ be the c\`adl\`ag random process defined (for $\omega\in[0,1]$ and $T\geq0$) by
\[X_T(\omega) = \left\{\begin{array}{ll} e^{2T} & \hbox{if } \hsl T - n \in [\omega-e^{-4T}, \hsl \omega+e^{-4T}) \hbox{ for some } n\in\mathbb{N}\\
										e^{T}  & \hbox{otherwise.} \end{array}\right.\]
\begin{center}\includegraphics[height=3cm]{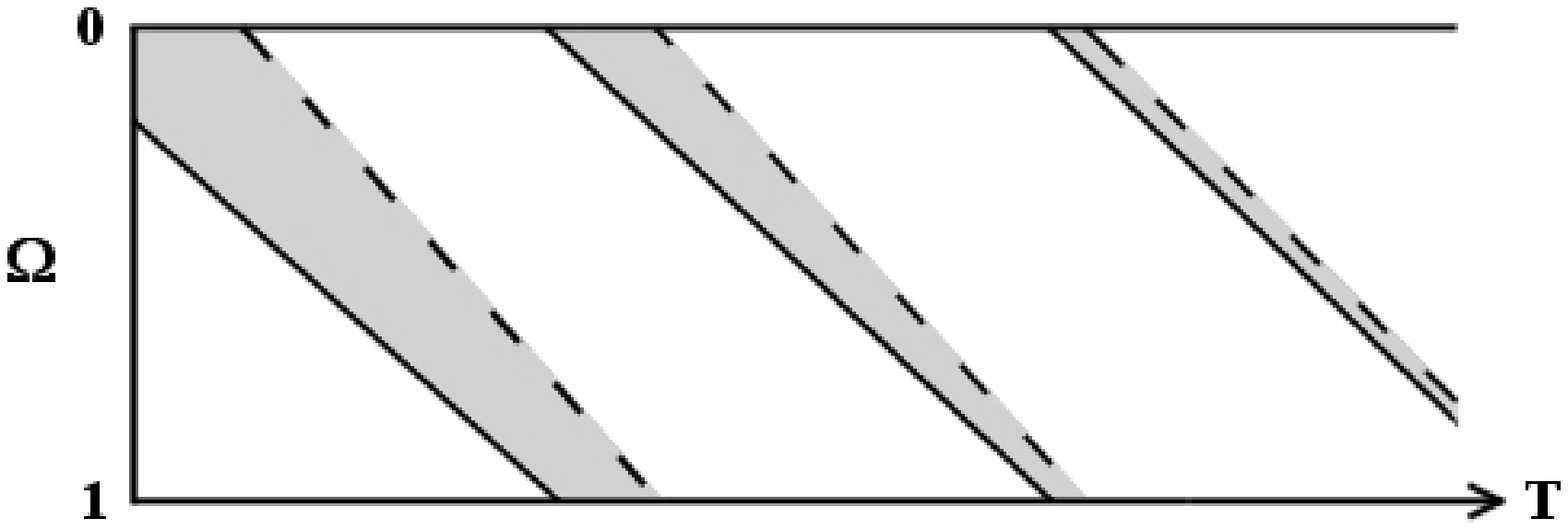}\end{center}
Then for every $\omega$,
\[\limsup \frac{1}{T}\log X_T(\omega) = 2\]
but
\[\frac{1}{T}\log\mathbb{E}[X_T] \to 1.\]

\bibliographystyle{plain}

\end{document}